\chardef\bslash=`\\ 
\newtheorem{theorem}{Theorem}[section]
\theoremstyle{definition}
\theoremstyle{remark}
\newcommand{\T}{\mathcal{T}}  
\newcommand{\curl}{{\rm{}curl}}
\newcommand{\ddiv}{{\rm{}div}}
\newcommand{\grad}{{\rm{}grad}}
\newcommand{\eval}[2][\right]{\relax
  \ifx#1\right\relax \left.\fi#2#1\rvert}
\begin{document}

\title{A mixed FEM for the quad-curl eigenvalue problem
}


\author{Jiguang Sun\\
               Michigan Technological University \\ jiguangs@mtu.edu
}

\maketitle

\begin{abstract}
The quad-curl problem arises in the study of the electromagnetic
interior transmission problem and magnetohydrodynamics (MHD).
In this paper, we study the quad-curl eigenvalue problem and propose a mixed method using edge elements for the computation of the eigenvalues.
To the author's knowledge, it is the first numerical treatment for the quad-curl eigenvalue problem.
Under suitable assumptions on the domain and mesh, we prove the optimal convergence. In addition, we show that the divergence-free condition
can be bypassed. Numerical results are provided to show the viability of the method. 
\end{abstract}

\section{Introduction}
\label{intro}
The quad-curl problem arises in the study of the electromagnetic interior transmission problem in the 
inverse electromagnetic scattering theory for inhomgeneous media
\cite{MonkSun2012SIAMSC} and magnetohydrodynamics (MHD)  equations \cite{ZhengHuXu2011MC}.
The corresponding quad-curl eigenvalue problem has fundamental importance for the analysis and computation of the electromagnetic interior transmission
eigenvalues \cite{Sun2011SIAMNA}.

In general, the numerical computation of the eigenvalue problem starts with the corresponding source problem.
There are only a few results on numerical methods for the quad-curl problem. 
The construction of conforming finite elements with suitable regularity for the quad-curl problem
could be extremely technical and might be prohibitively expensive even if it exists.
Recently Zheng et al. \cite{ZhengHuXu2011MC} propose a non-conforming finite element method to
construct a local element space with a small number of degrees of freedom and impose inter-element continuity along the tangential direction.

In this paper, we first propose a mixed finite element method for the quad-curl problem and  prove the optimal convergence. 
The major advantage of this approach lies in the fact that only curl-conforming edge elements are needed \cite{Nedelec1980NM}. 
Then we employ the mixed method to compute the quad-curl eigenvalues. 
We show the convergence for the eigenvalue problem following the theoretical frame work by Babu\v{s}ka and Osborn \cite{BabuskaOsborn1991}. 
To the author's knowledge, it is the first numerical treatment of the quad-curl eigenvalue problem.
For general theory and analysis on eigenvalue approximations, we refer the readers to 
\cite{BabuskaOsborn1991}, the recent paper \cite{Boffi2010AN} and the references therein.

The rest of the paper is organized as follows. Section 2 contains some preliminaries. In Section 3, we propose a mixed finite element method for the
the quad-curl problem and prove the optimal convergence.
In Section 4, we employ the mixed method for the quad-curl eigenvalue problem and show the convergence rate
of the eigenvalues and the associated generalized eigenspaces. 
In addition, we show that  the divergence-free condition,
which needs to be treated using the Lagrange multiplier for the source problem in general, can be ignored for the eigenvalue problem.
Preliminary numerical results are shown in Section 5. 
Finally, in Section 6, we make conclusions and discuss some future works.

\section{Preliminaries} \label{Preliminary}
\subsection{Function Spaces}
Let $D \subset \mathbb R^3$ be a bounded, simply connected, and convex polyhedral domain. 
The boundary of $D$ is assumed to be connected with unit outward norm $\boldsymbol\nu$.
We denote by $(\cdot, \cdot)$ the $L^2(D)$ inner product and by $\| \cdot \|$ the $L^2(D)$ norm.
The variational approach we shall describe for the quad-curl problem requires
several Hilbert spaces. We define
\[
H^s(\curl{}; D) := \left\{ {\bf u }\in L^2(D)^3\; |\; \curl{}^j\,{\bf u} \in L^2(D)^3, \, 1 \le j \le s \right\}
\]
equipped with the scalar product
\[
({\bf u}, {\bf v})_{H^s(\curl{}; D)} = ({\bf u}, {\bf v})+\sum_{j=1}^s(\curl{}^j\,{\bf u}, \curl{}^j\,{\bf v})
\]
and the corresponding norm $\|\cdot\|_{H^s(\curl{}; D)}$. Next we define spaces with boundary conditions
\begin{eqnarray*}
&&H_0^1(\curl{}; D) := \left\{ {\bf u} \in H^1(\curl{}, D)\; |\; {\bf u} \times  \boldsymbol\nu = 0  \text{ on } \partial D\right\},\\
&&H_0^2(\curl{}; D) := \left\{ {\bf u} \in H^2(\curl{}, D)\; |\; {\bf u} \times  \boldsymbol\nu = 0  \text{ and }  (\curl{} {\bf u}) \times \boldsymbol\nu = 0 \text{ on } \partial D\right\}.
\end{eqnarray*}
Note that $H_0^1(\text{curl}; D)$ in our notation is the standard space $H_0(\text{curl}; D)$.
We also need the space of functions
with square-integrable divergence $H(\ddiv;D)$ defined by
\[
H(\ddiv; D) = \{ {\bf u} \in L^2(D)^3\; |\; \ddiv\, {\bf u} \in L^2(D)\}
\]
equipped with the scalar product
\[
({\bf u}, {\bf v})_{H(\ddiv; D)} = ({\bf u}, {\bf v})+(\ddiv\,{\bf u}, \ddiv\,{\bf v})
\]
and the corresponding norm $\|\cdot\|_{H(\ddiv, D)}$.

Taking the divergence free condition into account, we define
\begin{eqnarray*}
X &=& \left\{ {\bf u} \in H(\curl{}; D) \cap H(\ddiv; D) | \ddiv \, {\bf u} = 0 \text { in } D \right\},\\
Y &=& \left\{ {\bf u} \in H_0(\curl{}; D) \cap H(\ddiv; D) | \ddiv \, {\bf u} = 0 \text { in } D \right\}.
\end{eqnarray*}
For functions in $Y$, the following Friedrichs inequality holds.
\begin{theorem}\label{C351} (see, for example, Corollary 3.51 of \cite{Monk2003})
Suppose that $D$ is a bounded Lipschitz domain. If $D$ is simply connected, and has a connected boundary, there is a
constant $C \ge 0$ such that for every ${\bf u} \in Y$
\begin{equation}\label{FriedrichsCurl}
\|{\bf u}\| \le C \|\curl{}\, {\bf u}\|.
\end{equation}
\end{theorem}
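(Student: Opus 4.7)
The plan is to prove this by the standard contradiction/compactness argument, since the inequality is a Poincaré-type estimate on a closed subspace of $L^2(D)^3$ on which $\curl$ has trivial kernel.

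First, I would establish the key analytic ingredient: the compact embedding $Y \hookrightarrow L^2(D)^3$. Since $D$ is a bounded Lipschitz domain (in fact convex polyhedral), any sequence bounded in $H_0(\curl;D) \cap H(\ddiv;D)$ is precompact in $L^2(D)^3$. This is Weber's compactness theorem for Maxwell-type spaces with the electric (tangential) boundary condition, and it is the heart of the matter. The proof typically uses a Helmholtz decomposition and regularity of the scalar potential to reduce to compactness of $H^1 \hookrightarrow L^2$; I would cite the standard reference rather than reproducing it.

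Next, I would suppose the inequality fails. Then there is a sequence $\{\mathbf{u}_n\} \subset Y$ with $\|\mathbf{u}_n\| = 1$ and $\|\curl\mathbf{u}_n\| \to 0$. The sequence is bounded in $H_0(\curl;D) \cap H(\ddiv;D)$ (using $\ddiv \mathbf{u}_n = 0$), so by the compact embedding a subsequence (still denoted $\mathbf{u}_n$) converges strongly in $L^2(D)^3$ to some $\mathbf{u}$ with $\|\mathbf{u}\| = 1$. Weak lower semicontinuity of $\|\curl\cdot\|$ together with $\|\curl\mathbf{u}_n\| \to 0$ forces $\curl\mathbf{u} = 0$. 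Passing to limits in $\ddiv\mathbf{u}_n = 0$ and in the tangential trace condition (the latter in a distributional sense preserved by convergence in $H(\curl;D)$) yields $\ddiv\mathbf{u} = 0$ and $\mathbf{u} \times \boldsymbol\nu = 0$ on $\partial D$.

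Finally, I would use the topological hypotheses on $D$ to conclude. Since $\curl\mathbf{u} = 0$ on the simply connected $D$, there exists a scalar potential $\varphi \in H^1(D)$ with $\mathbf{u} = \grad\varphi$. The condition $\ddiv\mathbf{u} = 0$ gives $\Delta\varphi = 0$, while $\mathbf{u} \times \boldsymbol\nu = 0$ forces $\varphi$ to be locally constant on $\partial D$; connectedness of $\partial D$ makes it globally constant there. Thus $\varphi$ solves a homogeneous Dirichlet problem for the Laplacian (up to an additive constant), so $\varphi$ is constant and $\mathbf{u} = 0$, contradicting $\|\mathbf{u}\| = 1$. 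The main obstacle in this scheme is the compact embedding step; everything else is essentially bookkeeping and the use of the topological assumptions at the very end.
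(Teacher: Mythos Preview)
Your argument is correct and is the standard compactness/contradiction proof of the Friedrichs inequality in $Y$: Weber's compact embedding of $H_0(\curl;D)\cap H(\ddiv;D)$ into $L^2(D)^3$, extraction of a strongly convergent subsequence, and identification of the limit as a curl-free, divergence-free field with vanishing tangential trace, which the simple connectedness of $D$ and the connectedness of $\partial D$ force to be zero.

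The paper, however, does not give a proof of this statement at all; it simply records the result and points to Corollary~3.51 of \cite{Monk2003}. In that reference the proof follows exactly the route you outline (compact embedding plus the triviality of the harmonic Dirichlet fields under the stated topological assumptions), so your proposal is both correct and aligned with the cited source, even though there is nothing in the present paper to compare it against.
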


Next we recall a regularity result from  \cite{Girault1988MC}.
\begin{theorem}\label{Girault23} (Theorem 2.3 of \cite{Girault1988MC})
Let $D$ be bounded, convex, simply connected polyhedron with connected boundary $\partial D$ and unit outward normal ${\boldsymbol \nu}$. 
All functions ${\boldsymbol \psi} \in L^2(D)^3$ that satisfy
\[
\ddiv\, {\boldsymbol \psi} = 0, \quad \curl{} \,{\boldsymbol \psi} \in L^2(D)^3,
\quad {\boldsymbol \psi} \cdot {\boldsymbol \nu} = 0\quad ( \text{or } {\boldsymbol \psi} \times {\boldsymbol \nu} = 0) \text{ on } \partial D
\]
belong to $H^1(D)^3$ and
\[
\|{\boldsymbol \psi}\|_{H^1(D)} \le C\|\curl{} \, {\boldsymbol \psi}\|.
\]
\end{theorem}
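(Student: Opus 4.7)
My plan is to derive the $H^1$ regularity of $\boldsymbol\psi$ from a Gaffney--Weber-type integration by parts identity that trades the Dirichlet energy $\|\nabla\boldsymbol\psi\|^2$ against $\|\curl\boldsymbol\psi\|^2+\|\ddiv\boldsymbol\psi\|^2$ at the cost of a boundary term whose sign is controlled by the convexity of $D$, and then to combine the resulting gradient bound with the Friedrichs inequality of Theorem~\ref{C351} (or a normal-trace analogue) to promote it to the full $H^1$ estimate.

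First I would reduce to the case where $\boldsymbol\psi$ is smooth on a smooth convex approximation $D_n$ of $D$, since the conclusion is a stable estimate that persists under density and weak-limit arguments. On such $D_n$ the pointwise identity
\[
|\nabla\boldsymbol\psi|^2 = |\curl\boldsymbol\psi|^2 + |\ddiv\boldsymbol\psi|^2 + \ddiv\bigl((\boldsymbol\psi\cdot\nabla)\boldsymbol\psi - \boldsymbol\psi\,\ddiv\boldsymbol\psi\bigr)
\]
follows from direct tensor calculus. Integrating over $D_n$ and applying the divergence theorem yields a boundary integral which, after decomposing $\boldsymbol\psi$ into its normal and tangential components and using either $\boldsymbol\psi\cdot\boldsymbol\nu=0$ or $\boldsymbol\psi\times\boldsymbol\nu=0$ on $\partial D_n$, reduces to a quadratic expression of the form $\int_{\partial D_n}\mathbf{B}(\boldsymbol\psi_T,\boldsymbol\psi_T)\,dS$, where $\mathbf{B}$ denotes the second fundamental form of $\partial D_n$. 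Convexity of $D_n$ makes $\mathbf{B}$ positive semidefinite, so this integral is nonnegative; combined with the assumption $\ddiv\boldsymbol\psi=0$ this gives $\|\nabla\boldsymbol\psi\|_{L^2(D_n)}\le\|\curl\boldsymbol\psi\|_{L^2(D_n)}$.

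Next I would exhaust the polyhedron $D$ by an increasing family of smooth convex subdomains $D_n\subset D$, obtained by smoothing the signed distance function near the edges and vertices while preserving convexity. Passing to the limit with the uniform gradient estimate transfers the bound to $D$. The Friedrichs inequality of Theorem~\ref{C351} (in the tangential-trace case; the normal-trace case follows from the Helmholtz decomposition together with the divergence-free and normal-vanishing hypotheses) supplies $\|\boldsymbol\psi\|\le C\|\curl\boldsymbol\psi\|$, and the two estimates assemble into the stated bound $\|\boldsymbol\psi\|_{H^1(D)}\le C\|\curl\boldsymbol\psi\|$.

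The principal obstacle is the polyhedral geometry: along the edges and vertices of $D$ the second fundamental form is not defined classically and can contribute singular distributional mass, so the convex exhaustion must be engineered so that the integrated curvature on $\partial D_n$ remains bounded uniformly in $n$ rather than blowing up. A secondary technical point is the density of smooth fields with the prescribed trace inside $\{ \boldsymbol\psi\in L^2(D)^3 : \ddiv\boldsymbol\psi=0,\ \curl\boldsymbol\psi\in L^2(D)^3\}$ subject to the given boundary condition; this is a standard trace-lifting argument but must be verified in the low-regularity setting of the hypothesis.
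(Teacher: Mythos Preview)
The paper does not prove this statement: Theorem~\ref{Girault23} is simply quoted from \cite{Girault1988MC} with no argument given, so there is no in-paper proof to compare against. Your Gaffney--Weber route is a classical way to obtain this kind of result on convex domains, and the pointwise identity you wrote down is correct.

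There is, however, a genuine gap in your approximation step. You propose to exhaust the polyhedron from the \emph{inside} by smooth convex $D_n\subset D$ and apply the integrated identity on each $D_n$. But the boundary condition $\boldsymbol\psi\cdot\boldsymbol\nu=0$ (respectively $\boldsymbol\psi\times\boldsymbol\nu=0$) is imposed only on $\partial D$; the restriction $\boldsymbol\psi|_{D_n}$ satisfies no boundary condition on $\partial D_n$, so after integrating by parts the surface term cannot be reduced to the second-fundamental-form expression whose sign convexity controls. In other words, the very step where convexity is supposed to enter never becomes available. The standard remedy is the opposite geometric move: approximate the convex polyhedron from the \emph{outside} by smooth convex bodies $D^n\supset D$ (convexity makes this easy), extend $\boldsymbol\psi$ to $D^n$ with uniform control of $\curl$ and $\ddiv$ and the appropriate trace on $\partial D^n$, apply the smooth-domain inequality there, and pass to the limit. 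Alternatively, one can bypass the curvature identity altogether and argue, as Girault does in \cite{Girault1988MC}, via a vector potential together with the $H^2$ regularity of the scalar Dirichlet/Neumann Laplacian on convex domains.
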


\subsection{The edge element}
We give a short introduction of the standard edge elements due to N\'ed\'elec \cite{Nedelec1980NM} here. We assume that the domain $D$
is covered by a regular quasi-uniform tetrahedral mesh. We denote the mesh by ${\cal T}_h$ where $h$ is the maximum diameter of the elements in ${\cal T}_h$.
%
Let $P_k$ be the space of polynomials
of maximum total degree $k$ and $\tilde{P}_k$ the space of homogeneous polynomials of degree $k$. We define
\[
R_k = (P_{k-1})^3 \oplus  \{ {\bf p} \in (\tilde{P}_k)^3\; |\; {\bf x} \cdot {\bf p}({\bf x}) = 0\;\text{for all } {\bf x}\in \mathbb{R}^3\}.
\]
The curl-conforming edge element space \cite{Nedelec1980NM} is given by
\[
U_h = \{ {\bf v} \in H(\curl;D)
\;|\; {\bf v}|_K \in R_k \text{ for all } K \in {\cal T}_h\}.
\]
The $H_0(\curl;D)$ conforming edge element space is given by
\begin{equation}
U_{0, h}= \{{\bf u}_h \in U_h\; |\; \, \boldsymbol{\nu} \times {\bf u}_h = 0 \quad \text{ on } \partial D \}
\label{Xhdef}
\end{equation}
which can be easily obtained by taking the degrees of freedom associated with edges or faces on $\partial D$ to vanish.
Let ${\bf r}_h {\bf u} \in U_h$ be the global interpolant \cite{Monk2003}. The following result holds.
\begin{theorem} (Lemma 5.38 of \cite{Monk2003})
Suppose there are constants $\delta > 0$ and $p > 2$ such that ${\bf u} \in H^{1/2+\delta}(K)^3$ and
$\curl \,{\bf u} \in L^p(K)^3$ for each $K \in \tau_h$. Then ${\bf r}_h {\bf u}$ is well-defined and bounded.
\end{theorem}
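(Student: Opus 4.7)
The plan is to verify that each of the Nédélec degrees of freedom defining $\mathbf{r}_h\mathbf{u}$ on a tetrahedron $K$ is a continuous linear functional of $\mathbf{u}$ under the stated regularity; boundedness of the full interpolant then follows from equivalence of norms on the finite-dimensional local space $R_k$ together with standard scaling. The Nédélec degrees of freedom split into three families: interior moments $\int_K \mathbf{u}\cdot\mathbf{q}\, dV$ with $\mathbf{q}\in (P_{k-3}(K))^3$, tangential face moments on each face $f\subset\partial K$ against suitable polynomials, and edge moments $\int_e (\mathbf{u}\cdot\boldsymbol\tau)\, q\, ds$ with $q\in P_{k-1}(e)$. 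The interior moments are immediately bounded by $\|\mathbf{u}\|_{L^2(K)}$, while the face moments are handled by the standard Sobolev trace theorem, which continuously embeds $H^{1/2+\delta}(K)^3$ into $L^2(f)^3$ for $\delta>0$; the strict inequality rules out the logarithmic failure at the endpoint $H^{1/2}$, at which one obtains only a distributional tangential trace.

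The main obstacle is the edge moments, since the tangential trace of a generic $H(\curl;K)$ field on a one-dimensional edge is not even defined as a distribution. My strategy is to fix a face $f\subset\partial K$ with $e\subset\partial f$, extend the test polynomial $q$ from $e$ to a smooth function $\tilde q$ on $f$ that vanishes on $\partial f\setminus e$, and apply the two-dimensional Stokes formula on $f$ to rewrite
\begin{equation*}
\int_e (\mathbf{u}\cdot\boldsymbol\tau)\, q\, ds = \int_f (\curl\mathbf{u})\cdot\boldsymbol\nu_f\, \tilde q\, dS + \int_f (\mathbf{u}\times\boldsymbol\nu_f)\cdot\nabla_f\tilde q\, dS,
\end{equation*}
in which no one-dimensional integrals remain. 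The second term is controlled by the $L^2(f)^3$ tangential trace of $\mathbf{u}$ already identified above. The first term requires the normal component of $\curl\mathbf{u}$ on $f$ to pair meaningfully with $\tilde q$; since $\ddiv(\curl\mathbf{u})=0$ automatically and $\curl\mathbf{u}\in L^p(K)^3$ with $p>2$, one obtains a normal trace in a negative-order Sobolev space (roughly $W^{-1/p,p}(\partial K)$) with enough regularity to localize to a single face and to be paired with $\tilde q\in C^\infty(\bar f)$; the strict inequality $p>2$ is precisely what prevents this trace from collapsing to a mere global distribution on $\partial K$.

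Assembling the three bounds, summing over $K\in\mathcal{T}_h$, and using shape regularity yields an estimate of the form
\begin{equation*}
\|\mathbf{r}_h\mathbf{u}\|_{H(\curl;D)}\le C\Bigl(\sum_{K\in\mathcal{T}_h}\bigl(\|\mathbf{u}\|_{H^{1/2+\delta}(K)}^2+\|\curl\mathbf{u}\|_{L^p(K)}^2\bigr)\Bigr)^{1/2},
\end{equation*}
proving well-definedness and boundedness. The only genuinely delicate point is identifying the correct trace space for $\curl\mathbf{u}\cdot\boldsymbol\nu_f$ under the sole assumption $\curl\mathbf{u}\in L^p$ with $p>2$; everything else is a straightforward invocation of trace theorems, Stokes' formula, and scaling.
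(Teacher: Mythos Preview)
The paper does not prove this statement at all: it is quoted verbatim as Lemma~5.38 of \cite{Monk2003} and used as a black box, so there is no ``paper's own proof'' to compare against. Your outline is precisely the argument that appears in the cited reference (originating with Amrouche, Bernardi, Dauge and Girault): bound the volume moments trivially, the face moments via the trace $H^{1/2+\delta}(K)\hookrightarrow L^2(f)$, and reduce each edge moment to face integrals by the surface Stokes identity
\[
\int_e (\mathbf{u}\cdot\boldsymbol\tau)\,q\,ds
=\int_f (\curl\mathbf{u})\cdot\boldsymbol\nu_f\,\tilde q\,dS
+\int_f (\mathbf{u}\times\boldsymbol\nu_f)\cdot\nabla_f\tilde q\,dS,
\]
with $\tilde q$ an extension of $q$ vanishing on $\partial f\setminus e$.

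One point deserves a sharper statement than you give it. You correctly flag the pairing $\langle(\curl\mathbf{u})\cdot\boldsymbol\nu_f,\tilde q\rangle$ as the delicate step, but the mechanism should be made explicit: since $\ddiv(\curl\mathbf{u})=0$ and $\curl\mathbf{u}\in L^p(K)^3$, the normal trace lies in $W^{-1/p,p}(\partial K)$, and the test function one actually pairs it with is the extension of $\tilde q$ by zero to all of $\partial K$. That extension belongs to $W^{1/p,p'}(\partial K)$ precisely because $1/p<1/2$ when $p>2$ (so the jump across the edge $e$ is admissible in this fractional space), which is exactly where the hypothesis $p>2$ enters. With that detail filled in, your sketch is complete and matches the standard proof.
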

The following result provides error estimates for the interpolant.
\begin{theorem}\label{Monk2003541} (Theorem 5.41 of \cite{Monk2003})
Let $\tau_h$ be a regular mesh on $D$. Then
\begin{itemize}
\item[(1)] If ${\bf u} \in H^s(D)^3$ and $\curl {\bf u} \in H^s(D)^3$ for $1/2+\delta \le s \le k$ for $\delta >0$ then
\begin{equation}\label{Ierror}
 \|{\bf u} - {\bf r}_h{\bf u}\|_{L^2(D)^3} + \| \curl ({\bf u} - {\bf r}_h{\bf u})\|_{L^2(D)^3} 
  \le Ch^s \left( \|{\bf u}\|_{H^s(D)^3} + \| \curl  {\bf u}\|_{H^s(D)^3}\right).
\end{equation}
\item[(2)] If ${\bf u} \in H^{1/2+\delta}(K)^3$, $0 < \delta \le 1/2$ and $\curl {\bf u}|_K \in R_k$, then
\[
\|{\bf u} - {\bf r}_h{\bf u}\|_{L^2(D)^3}  \le C \left( h_K^{1/2+\delta} \|{\bf u}\|_{H^{1/2+\delta}(K)^3} +h_K \|\curl {\bf u}\|_{L^2(K)^3}\right).
\]
\item[(3)] For $1/2 + \delta \le s \le k$ and $\delta >0$, the following result holds
\[
 \| \curl ({\bf u} - {\bf r}_h{\bf u})\|_{L^2(D)^3}  \le Ch^s \| \curl {\bf u} \|_{H^s(D)^3}.
\]
\end{itemize}
\end{theorem}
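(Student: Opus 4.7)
The plan is to argue element-by-element using the Piola (covariant) transform to a fixed reference tetrahedron $\hat K$, invoke a Bramble--Hilbert / Deny--Lions estimate on $\hat K$, and then pull the bounds back to $K\in\tau_h$ by the affine scaling. For each $K$ write $F_K(\hat{\mathbf x})=B_K\hat{\mathbf x}+b_K$ and set $\hat{\mathbf u}=B_K^{\top}(\mathbf u\circ F_K)$. Two facts are decisive: (a) the covariant transform commutes with $\mathbf r_h$, so $\widehat{\mathbf r_h\mathbf u}=\hat{\mathbf r}\hat{\mathbf u}$, and (b) $\widehat{\curl}\,\hat{\mathbf u}=\det(B_K)B_K^{-1}(\curl\mathbf u)\circ F_K$, which together with the commuting diagram $\curl(\mathbf r_h\mathbf u)=\mathbf w_h(\curl\mathbf u)$ (with $\mathbf w_h$ the divergence-conforming face-element interpolant onto $\curl U_h$) reduces the curl error to a face-element approximation problem.

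For part~(1), I would exploit that $\hat{\mathbf r}\hat{\mathbf p}=\hat{\mathbf p}$ for every $\hat{\mathbf p}\in R_k$ on $\hat K$. Since $\mathbf r_h$ is well defined and bounded on $H^{1/2+\delta}(K)^3\cap\{\curl\cdot\in L^p(K)^3\}$ by the preceding lemma, the usual Deny--Lions / Bramble--Hilbert argument gives
\[
\|\hat{\mathbf u}-\hat{\mathbf r}\hat{\mathbf u}\|_{L^2(\hat K)}+\|\widehat{\curl}(\hat{\mathbf u}-\hat{\mathbf r}\hat{\mathbf u})\|_{L^2(\hat K)}
\le C\bigl(|\hat{\mathbf u}|_{H^s(\hat K)}+|\widehat{\curl}\,\hat{\mathbf u}|_{H^s(\hat K)}\bigr).
\]
Scaling back with $\|B_K\|\lesssim h_K$, $\|B_K^{-1}\|\lesssim h_K^{-1}$, $|\det B_K|\sim h_K^3$ introduces the factor $h_K^s$ on each element, and summing over the regular quasi-uniform mesh produces the global estimate \eqref{Ierror}.

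Part~(3) is cleanest: by the commuting diagram $\curl(\mathbf u-\mathbf r_h\mathbf u)=\curl\mathbf u-\mathbf w_h\curl\mathbf u$, so the bound reduces to the standard $H^s$ error estimate for the face-element interpolant applied to the vector field $\curl\mathbf u$, giving $Ch^s\|\curl\mathbf u\|_{H^s(D)^3}$. Part~(2) is the trickiest: the hypothesis $\curl\mathbf u|_K\in R_k$ forces $\mathbf w_h(\curl\mathbf u)=\curl\mathbf u$ on $K$ (since the range of $\curl$ on $R_k$ is preserved by $\mathbf w_h$), so the curl part of the error vanishes and we only need an $L^2$ estimate on $\mathbf u-\mathbf r_h\mathbf u$. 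On $\hat K$ I would subtract a best polynomial approximation in $R_k$ and use the boundedness of $\hat{\mathbf r}$ on $H^{1/2+\delta}(\hat K)^3$ (which requires trace-style estimates for the edge-moment degrees of freedom, since $H^{1/2+\delta}$ is exactly the borderline regularity making edge line integrals meaningful); this yields a split controlled by $|\hat{\mathbf u}|_{H^{1/2+\delta}(\hat K)}$ and $\|\widehat{\curl}\,\hat{\mathbf u}\|_{L^2(\hat K)}$, and scaling back produces the two factors $h_K^{1/2+\delta}$ and $h_K$.

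The main obstacle is precisely the boundedness of $\hat{\mathbf r}$ on the low-regularity space $H^{1/2+\delta}(\hat K)^3$ needed for part~(2): the edge degrees of freedom of $R_k$ involve one-dimensional integrals of tangential traces, which are not continuous on $H^{1/2}$ but become continuous once one also controls $\curl\mathbf u$ in a higher integrability class. This is where the polynomial assumption $\curl\mathbf u|_K\in R_k$ is used crucially, and the full justification relies on the trace machinery underlying Lemma~5.38 quoted just above. The rest of the argument is the standard Nédélec scaling pipeline.
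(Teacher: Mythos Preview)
The paper does not supply its own proof of this theorem: it is simply quoted verbatim as Theorem~5.41 of Monk's monograph and used as a black box in the subsequent error analysis. There is therefore no argument in the paper to compare your proposal against.

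That said, your sketch is the standard route and is essentially how the result is established in Monk's book: the covariant Piola transform to a reference element, the commuting-diagram identity $\curl(\mathbf r_h\mathbf u)=\mathbf w_h(\curl\mathbf u)$ linking the edge and face interpolants, a Bramble--Hilbert/Deny--Lions argument on $\hat K$ exploiting $\hat{\mathbf r}\hat{\mathbf p}=\hat{\mathbf p}$ for $\hat{\mathbf p}\in R_k$, and affine scaling back to $K$. Your identification of the delicate point---boundedness of $\hat{\mathbf r}$ on the borderline space $H^{1/2+\delta}(\hat K)^3$ for part~(2), which hinges on the edge-moment degrees of freedom being controlled only once $\curl\mathbf u$ has sufficient integrability---is exactly right, and the polynomial hypothesis on $\curl\mathbf u|_K$ is precisely what kills the curl contribution so that only the low-regularity $L^2$ bound survives. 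One small quibble: you write that $\curl\mathbf u|_K\in R_k$ forces $\mathbf w_h(\curl\mathbf u)=\curl\mathbf u$; more precisely the relevant fact is that $\curl\mathbf u|_K$ lies in $\curl R_k$ (a subspace of the divergence-conforming element space), so the face interpolant reproduces it. This does not affect the structure of your argument.
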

The following inverse inequality for edge elements will be useful in the forthcoming error analysis.
\begin{theorem}\label{thesis413} (Lemma 4.1.3 of \cite{thesis2006}).
Let ${\cal T}_h$ be a regular and quasi-uniform mesh for $D$.
Then for ${\bf u}_h \in U_h$  the following holds
\[
\|{\bf u}_h\|_{H(\curl;D)} \le C h^{-1} \|{\bf u}_h\|
\]
for some constant $C$ independent of ${\bf u}_h$ and $h$.
\end{theorem}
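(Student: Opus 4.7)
The plan is to prove this by the standard scaling-to-a-reference-element argument used for finite element inverse estimates. Since $\|{\bf u}_h\|_{H(\curl;D)}^2 = \|{\bf u}_h\|^2 + \|\curl {\bf u}_h\|^2$, the estimate reduces to proving $\|\curl {\bf u}_h\| \le C h^{-1}\|{\bf u}_h\|$. It suffices to establish the local bound $\|\curl {\bf u}_h\|_{L^2(K)} \le C h^{-1}\|{\bf u}_h\|_{L^2(K)}$ on each tetrahedron $K\in {\cal T}_h$ and then sum over elements (no inter-element continuity of $\curl$ is required for this).

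First I would fix the reference tetrahedron $\hat K$ and, for each $K$, write the affine map $F_K(\hat x) = B_K\hat x + b_K$. Shape regularity and quasi-uniformity give $\|B_K\|\sim h$, $\|B_K^{-1}\|\sim h^{-1}$, $|\det B_K|\sim h^3$. Next I pull ${\bf u}_h|_K$ back to $\hat K$ via the covariant (edge-element) Piola transform $\hat{\bf u}(\hat x) = B_K^T {\bf u}_h(F_K(\hat x))$, which preserves the local polynomial structure, so $\hat{\bf u}\in R_k$ on $\hat K$. A direct change of variables, combined with the standard identity $\widehat{\curl}\,\hat{\bf u} = (\det B_K)\,B_K^{-1}(\curl {\bf u}_h)\circ F_K$, yields the scaling estimates
\begin{equation*}
\|\hat{\bf u}\|_{L^2(\hat K)} \;\le\; C\,h^{-1/2}\,\|{\bf u}_h\|_{L^2(K)}, \qquad
\|\curl {\bf u}_h\|_{L^2(K)} \;\le\; C\,h^{-1/2}\,\|\widehat{\curl}\,\hat{\bf u}\|_{L^2(\hat K)}.
\end{equation*}

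Then, since $R_k$ is a finite-dimensional space on the fixed reference element $\hat K$, all norms on $R_k$ are equivalent; in particular there is a constant depending only on $\hat K$ and $k$ such that
\begin{equation*}
\|\widehat{\curl}\,\hat{\bf u}\|_{L^2(\hat K)} \;\le\; C\,\|\hat{\bf u}\|_{L^2(\hat K)}.
\end{equation*}
Chaining the three inequalities gives $\|\curl {\bf u}_h\|_{L^2(K)} \le C h^{-1}\|{\bf u}_h\|_{L^2(K)}$; squaring and summing over $K\in {\cal T}_h$ delivers the desired $\|\curl {\bf u}_h\| \le Ch^{-1}\|{\bf u}_h\|$, whence the full $H(\curl;D)$ estimate follows.

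The only step requiring care is the bookkeeping of the Piola scalings: one must track how the three factors $|\det B_K|$, $\|B_K\|$, and $\|B_K^{-1}\|$ combine under the covariant transform so that the exponents of $h$ on both sides match. The rest is routine for shape-regular, quasi-uniform meshes.
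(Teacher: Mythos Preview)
Your argument is correct: the covariant Piola transform scalings are tracked accurately, and the chaining of $\|\curl {\bf u}_h\|_{L^2(K)} \le C h^{-1/2}\|\widehat{\curl}\,\hat{\bf u}\|_{L^2(\hat K)} \le C h^{-1/2}\|\hat{\bf u}\|_{L^2(\hat K)} \le C h^{-1}\|{\bf u}_h\|_{L^2(K)}$ is exactly the standard route to inverse estimates for $H(\curl)$-conforming elements. Note, however, that the paper does not supply its own proof of this statement --- it is quoted as Lemma~4.1.3 of \cite{thesis2006} without argument --- so there is nothing in the paper to compare against; your scaling-to-the-reference-element proof is precisely what one expects that reference to contain.
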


\subsection{The curl-curl problem}\label{curl-curl}
The curl-curl problem was studied extensively in literature, for example, see \cite{Kikuchi1989JJAM} and \cite{MonkDemkowicz2000MC}. 
Since it is an important ingredient of the numerical scheme for
 the quad-curl problem, we give a brief description here. Given ${\bf f}\in H(\ddiv; D) $
such that $\ddiv {\bf f}=0$, find ${\bf u}$ such that
\begin{subequations}\label{curl2S}
\begin{align}
\label{curl2SE}&\curl \,\curl\,  {\bf u}= {\bf f} &\text{in } D,\\[1mm]
\label{curl2Sdiv0}&\ddiv \, {\bf u} =0 &\text{in } D,\\[1mm]
\label{curl2SBCa}& {\bf u} \times \boldsymbol\nu = 0 &\text{on } \partial D.
\end{align}
\end{subequations}

To enforce the divergence-free condition on ${\bf u}$, the following mixed formulation is used (see \cite{MonkDemkowicz2000MC}
and references therein). Find $({\bf u}, p) \in H_0(\curl,D) \times H_0^1(D)$ such that
\begin{subequations}\label{curl2Sweak}
\begin{align}
\label{curl2SweakA}(\curl \, {\bf u}, \curl \, \boldsymbol\phi) + ( \grad \, p, {\boldsymbol\phi})&= ({\bf f}, {\bf \boldsymbol\phi}) &\text{for all } {\boldsymbol \phi} \in H_0(\curl, D),\\[1mm]
\label{curl2SweakB}({\bf u}, \grad \, q) &=0 &\text{for all } q \in H_0^1(D).
\end{align}
\end{subequations}
By Theorem 1.1 of \cite{BrezziFortin91Book}, there exists a unique solution $({\bf u}, p)$ of \eqref{curl2Sweak}.
By choosing $ \boldsymbol\phi = \nabla p$ in (\ref{curl2Sweak}) and using the fact that ${\bf f}$ is divergence free, we see that $p=0$.
Furthermore,  ${\bf u}$ satisfies
\[
\|{\bf u}\|_{H(\curl,D)}  \le C\|{\bf f}\|.
\]

Next, we prove a regularity result of the curl-curl problem.
\begin{theorem}\label{curlcurlR}
Let $D$ satisfy the conditions in Theorem \ref{Girault23} and  ${\bf u}$ be the solution of the curl-curl problem \eqref{curl2Sweak}, then we have that
\[
{\bf u} \in H^1(D), \text{ and } \curl \, {\bf u} \in H^1(D).
\]
\end{theorem}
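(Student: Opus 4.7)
The plan is to apply Theorem \ref{Girault23} twice: once to $\mathbf{u}$ itself, and once to $\curl\,\mathbf{u}$. The setup already gives us most of what we need.

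First, from the discussion following \eqref{curl2Sweak}, we have $p=0$, so testing \eqref{curl2SweakB} against any $q\in H_0^1(D)$ shows that $\ddiv\,\mathbf{u}=0$ in $D$ in the distributional sense. Together with $\mathbf{u}\in H_0(\curl;D)$, the hypotheses of Theorem \ref{Girault23} are satisfied (with the tangential boundary condition $\mathbf{u}\times\boldsymbol\nu=0$), so $\mathbf{u}\in H^1(D)^3$ and $\|\mathbf{u}\|_{H^1(D)}\le C\|\curl\,\mathbf{u}\|\le C\|\mathbf{f}\|$, using the a priori bound stated just before the theorem.

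Next, set $\mathbf{w}=\curl\,\mathbf{u}$. Automatically $\ddiv\,\mathbf{w}=\ddiv\,\curl\,\mathbf{u}=0$, and from \eqref{curl2SE} we have $\curl\,\mathbf{w}=\curl\,\curl\,\mathbf{u}=\mathbf{f}\in L^2(D)^3$. To invoke Theorem \ref{Girault23} on $\mathbf{w}$ I would use the \emph{normal} boundary condition $\mathbf{w}\cdot\boldsymbol\nu=0$ on $\partial D$. This is the step that needs a brief justification: since $\mathbf{u}\in H^1(D)^3$ from the first step and its tangential trace vanishes, the surface gradient/curl identity $(\curl\,\mathbf{u})\cdot\boldsymbol\nu=\mathrm{curl}_\tau(\mathbf{u}_\tau)$ on $\partial D$ gives $\mathbf{w}\cdot\boldsymbol\nu=0$ because $\mathbf{u}_\tau=0$. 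Then Theorem \ref{Girault23} yields $\curl\,\mathbf{u}\in H^1(D)^3$ with $\|\curl\,\mathbf{u}\|_{H^1(D)}\le C\|\curl\,\curl\,\mathbf{u}\|=C\|\mathbf{f}\|$.

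The only non-automatic step is the verification that $\curl\,\mathbf{u}\cdot\boldsymbol\nu=0$ on $\partial D$; everything else is an immediate consequence of the mixed formulation, the regularity of the domain, and the cited theorems. This is not really an obstacle, but it is the place where some care about traces is required: once $\mathbf{u}\in H^1(D)^3$ is established in the first step, the surface differential identity is standard for Lipschitz domains and allows the second application of Theorem \ref{Girault23} to go through.
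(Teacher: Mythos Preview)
Your proof is correct and follows essentially the same strategy as the paper: apply Theorem~\ref{Girault23} first to $\mathbf{u}$ (tangential boundary condition) and then to $\curl\,\mathbf{u}$ (normal boundary condition). The only difference is in how the key fact $\boldsymbol\nu\cdot\curl\,\mathbf{u}=0$ is verified: the paper uses a duality argument via two integrations by parts against $\grad\,p$ for $p\in C^\infty(\overline D)$, whereas you invoke the surface identity $(\curl\,\mathbf{u})\cdot\boldsymbol\nu=\mathrm{curl}_\tau(\mathbf{u}_\tau)$ directly after establishing $\mathbf{u}\in H^1(D)^3$.
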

\begin{proof}
Choosing ${\bf v} = {\bf u}$ and $q = p$ in \eqref{curl2Sweak}, we obtain
\[
\|\curl \, {\bf u} \|^2 = ({\bf f}, {\bf u}) \le \|{\bf f}\| \|{\bf u}\|.
\]
The Friedrichs inequality implies
\[
\|{\bf u}\| \le C \| \curl  \,{\bf u}\|.
\]
So $\|\curl  {\bf u}\| \le C  \|{\bf f}\|$. In addition, $\ddiv {\bf u} = 0$, ${\bf u} \times \boldsymbol\nu = 0$ on $\partial D$.
Theorem \ref{Girault23} implies
$
{\bf u} \in H^1(D)^3.
$
Integrating by parts in \eqref{curl2SweakA}, we have that
\[
\curl \,\curl \, {\bf u} = {\bf f} -\grad  \,p \in L^2(D)^3.
\]
Obviously, $\curl \, {\bf u}$ is divergence free, i.e.
\[
\ddiv \, \curl\,  {\bf u} = 0 \quad \text{in } D.
\]
Furthermore, we have
\[
\boldsymbol\nu \cdot \curl \, {\bf u} = 0 \quad \text{on } \partial D.
\]
To see this, note that for all $p \in C^\infty(\overline{D})$, on the one hand,  one has
\[
(\curl\,  {\bf u}, \grad \, p) = ({\bf u}, \curl \, \grad \, p) + \langle \boldsymbol\nu \times {\bf u}, \grad \, p \rangle_{\partial D} = 0.
\]
On the other hand, one has
\[
(\curl \, {\bf u}, \grad \, p) = - (\ddiv\, \curl  \,{\bf u}, p) + \langle \boldsymbol\nu \cdot \curl \, {\bf u}, p  \rangle_{\partial D}.
\]
Then we apply Theorem \ref{Girault23} again to obtain that
 $\curl \, {\bf u} \in H^1(D)^3$.
\end{proof}

Now we describe the edge element method for the curl-curl problem  \cite{MonkDemkowicz2000MC}. 
Let the finite element space for $H_0^1(D)$ be given by
\[
S_h = \left\{ p_h \in H_0^1(D)\; |\; p_h |_K \in P_k \text{ for all } K \in {\cal T}_h \right\}.
\]
It follows that $\grad  S_h \subset U_{0,h}$. A function ${\bf u} \in L^2(D)^3$ is called discrete divergence-free if
\[
({\bf u}, \grad  \xi_h) = 0 \quad \text{ for all } \xi_h \in S_h.
\]
The discrete Helmholtz decomposition (see Section 7.2.1 in \cite{Monk2003}) can be defined via
\[
U_{0, h} = Y_{h} \oplus \grad  S_h
\]
where $Y_{h}$ is given by
\begin{equation}
Y_{h} = \left\{ {\bf u}_h \in U_{0,h} \;|\; ({\bf u}_h, \grad \, \xi_h) = 0 \quad \text{ for all } \xi_h \in S_h\right\}.
\label{X0hdef}
\end{equation}
Then the discrete problem for \eqref{curl2S} can be stated as: Find $({\bf u}_h, p_h) \in U_{0, h} \times S_h$ such that
\begin{subequations}\label{curl2Sdis}
\begin{align}
\label{curl2SdisE}(\curl \, {\bf u}_h, \curl \, \boldsymbol\phi_h) + (\grad \, p_h,  {\boldsymbol \phi}_h)&= ({\bf f}, {\boldsymbol\phi}_h) &\text{for all } {\boldsymbol\phi}_h \in U_{0, h},\\[1mm]
\label{curl2Sdisdiv0}({\bf u}_h, \grad \, q_h) &=0 &\text{for all } q_h \in S_h.
\end{align}
\end{subequations}
Since $\text{grad} S_h \subset U_{0,h}$ we choose $\boldsymbol\phi_h = \text{grad}\, p_h$ in (\ref{curl2SdisE}) and conclude that $p_h = 0$.

\begin{theorem} (Discrete Compactness of $Y_h$, Lemma 7.20 of \cite{Monk2003})\label{fried}
Let $D$ be a bounded simply connected Lipschitz domain and ${\bf u}_h \in Y_h$. 
There exists a positive constant $C$ independent of $h$ such that, for $h $ small enough,
\[
\|{\bf u}_h \| \le C \| \curl  {\bf u}_h \|.
\]
\end{theorem}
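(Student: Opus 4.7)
My plan is to argue by contradiction. Suppose the inequality fails; then there are sequences $h_n\to 0$ and ${\bf u}_{h_n}\in Y_{h_n}$ with $\|{\bf u}_{h_n}\|=1$ but $\|\curl\,{\bf u}_{h_n}\|\to 0$. I will derive a contradiction by splitting ${\bf u}_{h_n}$ into two pieces, each of which tends to zero in $L^2$.

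First, I lift $\curl\,{\bf u}_{h_n}$ to a smoother vector field. The field ${\bf f}_n:=\curl\,{\bf u}_{h_n}$ is automatically divergence-free, and the boundary condition ${\bf u}_{h_n}\times\boldsymbol\nu=0$ forces ${\bf f}_n\cdot\boldsymbol\nu=0$ on $\partial D$. Solving a curl-curl problem of the type analyzed in Section~\ref{curl-curl}, I obtain $\beta_n\in H_0(\curl;D)$ with $\ddiv\,\beta_n=0$ and $\curl\,\beta_n={\bf f}_n$. Theorem~\ref{Girault23} then yields $\beta_n\in H^1(D)^3$ with
\[
\|\beta_n\|_{H^1(D)}\le C\,\|\curl\,\beta_n\|=C\,\|\curl\,{\bf u}_{h_n}\|\longrightarrow 0.
\]

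Next, I use the discrete de~Rham structure to decompose ${\bf u}_{h_n}$. Let $r_{h_n}\beta_n\in U_{0,h_n}$ be the N\'ed\'elec interpolant, which is well defined because $\beta_n\in H^1(D)^3$. The commuting property $\curl\circ r_{h_n}=\Pi^{\mathrm{div}}_{h_n}\circ\curl$ for N\'ed\'elec/Raviart--Thomas elements, together with the fact that $\curl\,{\bf u}_{h_n}$ already lies in the target div-conforming space and is therefore reproduced by $\Pi^{\mathrm{div}}_{h_n}$, gives $\curl\,(r_{h_n}\beta_n)=\curl\,{\bf u}_{h_n}$. Hence ${\bf u}_{h_n}-r_{h_n}\beta_n\in U_{0,h_n}$ is curl-free, and since $D$ is simply connected the exactness of the discrete complex $S_{h_n}\xrightarrow{\nabla}U_{0,h_n}\xrightarrow{\curl}\cdots$ yields $q_{h_n}\in S_{h_n}$ with ${\bf u}_{h_n}=r_{h_n}\beta_n+\nabla\,q_{h_n}$.

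The discrete divergence-free property, tested against $q_{h_n}\in S_{h_n}$, gives
\[
0=({\bf u}_{h_n},\nabla\,q_{h_n})=(r_{h_n}\beta_n,\nabla\,q_{h_n})+\|\nabla\,q_{h_n}\|^2,
\]
so $\|\nabla\,q_{h_n}\|\le\|r_{h_n}\beta_n\|$. Stability of the N\'ed\'elec interpolant (e.g.\ Theorem~\ref{Monk2003541}(2), applicable because $\curl\,\beta_n=\curl\,{\bf u}_{h_n}$ is piecewise polynomial) then yields $\|r_{h_n}\beta_n\|\le C\bigl(\|\beta_n\|_{H^1(D)}+\|\curl\,{\bf u}_{h_n}\|\bigr)\to 0$. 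Therefore
\[
\|{\bf u}_{h_n}\|\le\|r_{h_n}\beta_n\|+\|\nabla\,q_{h_n}\|\le 2\,\|r_{h_n}\beta_n\|\longrightarrow 0,
\]
contradicting $\|{\bf u}_{h_n}\|=1$. I expect the main obstacle to be the two ingredients drawn from the discrete de~Rham complex: the commuting relation $\curl\circ r_h=\Pi^{\mathrm{div}}_h\circ\curl$ (with $\Pi^{\mathrm{div}}_h$ acting as the identity on $\curl\,{\bf u}_{h_n}$), and the existence of the scalar potential $q_{h_n}\in S_{h_n}$ for a curl-free edge element field on the simply connected polyhedron $D$. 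Neither is recorded in the preliminaries, so invoking them cleanly is the technical heart of the argument.
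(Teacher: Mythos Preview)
The paper does not supply its own proof of this statement; it is quoted as Lemma~7.20 of \cite{Monk2003} and used as a black box. So there is nothing in the paper to compare against, and your proposal is effectively a reconstruction of the argument from the cited reference. The overall strategy---contradiction, lift $\curl\,{\bf u}_{h_n}$ to a regular potential $\beta_n$, interpolate, use discrete exactness to peel off a gradient, and kill both pieces---is exactly the standard route taken in Monk's book, so in that sense your approach matches the intended one.

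Two small points are worth tightening. First, the construction of $\beta_n$ is not really ``a curl-curl problem of the type analyzed in Section~\ref{curl-curl}'': that section solves $\curl\curl\,{\bf u}={\bf f}$, which does not directly produce a field with prescribed curl and zero tangential trace. What you actually want is the continuous Helmholtz decomposition of ${\bf u}_{h_n}\in H_0(\curl;D)$ as ${\bf u}_{h_n}=\beta_n+\nabla p_n$ with $\beta_n\in Y$; then $\curl\,\beta_n=\curl\,{\bf u}_{h_n}$ and $\beta_n\times\boldsymbol\nu=0$ come for free, and Theorem~\ref{Girault23} gives the $H^1$ bound. Second, invoking Theorem~\ref{Girault23} uses convexity of $D$, which is part of the paper's standing assumptions but is stronger than the ``Lipschitz'' hypothesis in the theorem statement; on a general Lipschitz domain one only gets $\beta_n\in H^{1/2+\delta}$, which still suffices for the interpolant but changes which estimate you cite. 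Neither issue is fatal; with these adjustments your argument is correct and coincides with the proof in \cite{Monk2003}.
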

\begin{theorem}\label{thm1} (Theorems 2.39 and 2.45 of \cite{Monk2003})
For $h$ small enough, the discrete problem \eqref{curl2Sdis} has a unique solution $({\bf u}_h, p_h) \in U_{0,h} \times S_h$ with $p_h = 0$.
In addition, if $({\bf u}, p) \in H_0(\curl; D) \times H_0^1(D)$ is the solution of \eqref{curl2S} with $p=0$, there exists a constant $C$
independent of $h$, ${\bf u}$, and ${\bf u}_h$ such that
\[
\|{\bf u}-{\bf u}_h\|_{H(\curl;D)}   \le 
C \inf_{{\bf v}_h \in U_{0,h}}\|{\bf u}-{\bf v}_h\|_{H(\curl;D)}.
\]
\end{theorem}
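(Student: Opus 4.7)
The plan is to apply the standard Babu\v{s}ka--Brezzi theory for discrete saddle-point problems, specialized to the curl-conforming setting via the discrete Helmholtz decomposition $U_{0,h}=Y_h \oplus \grad S_h$ and the discrete Friedrichs inequality of Theorem \ref{fried}.

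First I would establish $p_h=0$ and well-posedness. The inclusion $\grad S_h \subset U_{0,h}$ is exploited by choosing $\boldsymbol\phi_h = \grad p_h$ in \eqref{curl2SdisE}: the curl term vanishes because $\curl\grad = 0$, leaving $\|\grad p_h\|^2 = ({\bf f}, \grad p_h)$, which equals zero after integration by parts since $\ddiv {\bf f}=0$ and $p_h \in H_0^1(D)$; hence $p_h=0$. Uniqueness of ${\bf u}_h$ then reduces to showing that any ${\bf u}_h \in Y_h$ with $\curl {\bf u}_h = 0$ must vanish, which is precisely the content of Theorem \ref{fried}. Existence follows from finite-dimensionality.

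For the error estimate I would start from Galerkin orthogonality: subtracting \eqref{curl2SdisE} from \eqref{curl2SweakA} (both with trivial multiplier) yields $(\curl({\bf u}-{\bf u}_h), \curl \boldsymbol\phi_h)=0$ for all $\boldsymbol\phi_h \in U_{0,h}$. Taking $\boldsymbol\phi_h = {\bf v}_h - {\bf u}_h$ for arbitrary ${\bf v}_h \in U_{0,h}$ and applying Cauchy--Schwarz immediately gives $\|\curl({\bf u}-{\bf u}_h)\| \le \|\curl({\bf u}-{\bf v}_h)\|$, the classical C\'ea estimate for the curl component.

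To control $\|{\bf u}-{\bf u}_h\|$ I would use the discrete Helmholtz decomposition to write ${\bf u}_h - {\bf v}_h = {\bf w}_h + \grad \eta_h$ with ${\bf w}_h \in Y_h$ and $\eta_h \in S_h$. Theorem \ref{fried} applied to ${\bf w}_h$, together with $\curl {\bf w}_h = \curl({\bf u}_h-{\bf v}_h)$ and a triangle inequality, bounds $\|{\bf w}_h\|$ by a constant multiple of $\|\curl({\bf u}-{\bf v}_h)\|$. For the gradient piece, the discrete divergence-free condition on ${\bf u}_h$ combined with the continuous divergence-freeness of ${\bf u}$ (noting $\xi_h \in H_0^1(D)$ so boundary terms vanish) produces $(\grad \eta_h, \grad \xi_h) = ({\bf u}-{\bf v}_h, \grad \xi_h)$; setting $\xi_h = \eta_h$ gives $\|\grad \eta_h\| \le \|{\bf u}-{\bf v}_h\|$. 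Assembling the three pieces by the triangle inequality applied to ${\bf u}-{\bf u}_h = ({\bf u}-{\bf v}_h) - {\bf w}_h - \grad \eta_h$, and infimizing over ${\bf v}_h$, produces the claimed bound. The main obstacle in this scheme is the discrete Friedrichs inequality itself (Theorem \ref{fried}): its proof for N\'ed\'elec elements on Lipschitz domains is delicate and holds only for $h$ sufficiently small, which is precisely what dictates the smallness hypothesis in the statement; the remaining steps are standard algebraic manipulations typical of mixed methods.
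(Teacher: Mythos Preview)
The paper does not supply its own proof of this theorem; it simply cites Theorems~2.39 and~2.45 of Monk's monograph, which are the abstract well-posedness and quasi-optimality results for discrete saddle-point problems in the Babu\v{s}ka--Brezzi framework. The only argument given in the paper is the one-line observation (placed just before the theorem) that $\boldsymbol\phi_h=\grad p_h$ forces $p_h=0$, which you reproduce.

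Your proposal is correct and more self-contained than the paper: rather than invoking the abstract mixed-method machinery, you give a direct argument tailored to this specific problem, using the discrete Helmholtz decomposition $U_{0,h}=Y_h\oplus\grad S_h$ together with the discrete Friedrichs inequality (Theorem~\ref{fried}). The C\'ea step for the curl part and the splitting ${\bf u}_h-{\bf v}_h={\bf w}_h+\grad\eta_h$ to handle the $L^2$ part are both sound; in particular your control of $\grad\eta_h$ via $({\bf u}-{\bf v}_h,\grad\xi_h)$ correctly exploits that both ${\bf u}$ and ${\bf u}_h$ are orthogonal to $\grad S_h$. The advantage of your route is transparency: every constant is traced back to the discrete Friedrichs constant, and no familiarity with the general Brezzi theory is required. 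The advantage of the paper's route is brevity, since the referenced abstract theorems already package the inf--sup and kernel-coercivity verifications.
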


\section{The Quad-curl problem}  \label{quadcurl}
The quad-curl source problem or simply quad-curl problem is defined as follows.
Given a divergence free field ${\bf f}\in L^2(D)^3$, find the vector field ${\bf u}$ such that
\begin{subequations}\label{curl4S}
\begin{align}
\label{curl4SE}&(\curl{})^4\, {\bf u}= {\bf f} &\text{in } D,\\[1mm]
\label{curl4Sdiv0}&\ddiv \,{\bf u} =0 &\text{in } D,\\[1mm]
\label{curl4SBCa}& {\bf u} \times \boldsymbol\nu = 0 &\text{on } \Gamma,\\[1mm]
\label{curl4SBCb}& (\curl \,{\bf u}) \times \boldsymbol\nu = 0 &\text{on } \Gamma.
\end{align}
\end{subequations}

In this section, we first prove the well-posedness of the quad-curl problem in an appropriate functional space.
Then we propose a mixed formulation for it. To this end, we let $V$ and $W$ be given by
\begin{eqnarray}
\label{space4S}
V &:= &\left\{ {\bf u} \in H_0^2(\curl; D)\cap H(\ddiv; D) \;| \; \ddiv\, {\bf u} = 0 \right\},\\
W&:=&\{{\bf u}\in H^2 ({\rm curl};D)\cap H({\rm div};D)\;|\;\ddiv{\bf u}=0\}.
\end{eqnarray}
The curl-curl operator maps $V$ to $L^2(D)^3$.
We define the bilinear form ${\mathcal C}: V\times V \to \mathbb R$
\begin{equation}\label{bilinearV}
\mathcal{C} ({\bf u},{\bf v}) := (\curl \, \curl \, {\bf u}, \curl \, \curl  \,{\bf v}) \quad \text{ for all } {\bf u}, {\bf v} \in V.
\end{equation}
Let ${\bf f} \in H(\ddiv; D) $  such that $\ddiv\, {\bf f}=0$ in $D$. The weak formulation for the quad-curl problem is to find ${\bf u} \in V$ such that
\begin{equation}\label{curlSWeak}
\mathcal{C} ({\bf u}, {\bf v})  = ({\bf f}, {\bf v}) \quad \text{for all } {\bf v} \in V.
\end{equation}

\begin{theorem} 
Let ${\bf f} \in H(\ddiv; D) $ such that $\ddiv {\bf f}=0$ in $D$. There exists a unique solution ${\bf u} \in V$ to \eqref{curlSWeak}.
\end{theorem}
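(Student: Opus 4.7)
The plan is a direct application of the Lax--Milgram lemma on $V$ endowed with the $H^2(\curl; D)$ inner product. First I would check that $V$ is a Hilbert space in this inner product: $V$ is the intersection of the closed subspaces $H_0^2(\curl; D)$, $H(\ddiv; D)$, and the kernel of the continuous divergence operator, so it inherits completeness. Continuity and symmetry of $\mathcal{C}$ on $V$ are immediate from the definition \eqref{bilinearV}, and the linear functional ${\bf v}\mapsto ({\bf f},{\bf v})$ is bounded on $V$ by Cauchy--Schwarz since $V\hookrightarrow L^2(D)^3$. The only substantive step is coercivity.

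For coercivity, the strategy is to apply the Friedrichs inequality from \thmref{C351} twice. Given ${\bf u}\in V$, the conditions ${\bf u}\in H_0(\curl;D)$, $\ddiv\,{\bf u}=0$, together with the hypotheses on $D$, put ${\bf u}$ into the space $Y$, so
\[
\|{\bf u}\|\le C\,\|\curl\,{\bf u}\|.
\]
Next I would verify that ${\bf w}:=\curl\,{\bf u}$ also lies in $Y$: since ${\bf u}\in H_0^2(\curl;D)$, we have $(\curl\,{\bf u})\times\boldsymbol\nu=0$ on $\partial D$ and $\curl\,\curl\,{\bf u}\in L^2(D)^3$, so ${\bf w}\in H_0(\curl;D)$; and $\ddiv\,{\bf w}=\ddiv\,\curl\,{\bf u}=0$ identically. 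Applying \thmref{C351} again yields
\[
\|\curl\,{\bf u}\|\le C\,\|\curl\,\curl\,{\bf u}\|.
\]
Chaining the two bounds gives
\[
\|{\bf u}\|^2+\|\curl\,{\bf u}\|^2+\|\curl\,\curl\,{\bf u}\|^2\le C\,\|\curl\,\curl\,{\bf u}\|^2=C\,\mathcal{C}({\bf u},{\bf u}),
\]
which is coercivity of $\mathcal{C}$ on $V$ with respect to the $H^2(\curl;D)$ norm.

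With continuity, symmetry, and coercivity in hand, the Lax--Milgram theorem provides a unique ${\bf u}\in V$ solving \eqref{curlSWeak}. The main obstacle in this argument is the second application of the Friedrichs inequality, which rests on checking that $\curl\,{\bf u}$ indeed satisfies the tangential boundary condition and is divergence-free; this is precisely where the stronger regularity encoded in $H_0^2(\curl;D)$ (as opposed to $H_0^1(\curl;D)$) is used, and it is the reason the problem is well-posed on $V$ without invoking any Lagrange multiplier.
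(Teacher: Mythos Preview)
Your proof is correct and follows essentially the same route as the paper: establish coercivity of $\mathcal{C}$ on $V$ by applying the Friedrichs inequality of \thmref{C351} twice (once to ${\bf u}$ and once to $\curl\,{\bf u}$), then invoke Lax--Milgram. The paper's own proof is a one-line version of exactly this argument, so your write-up simply fills in the details it leaves implicit.
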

\begin{proof}
Due to the fact that the functions in $V$ are divergence-free,
using the Friedrichs inequality in Theorem~\ref{C351} twice, we see that
the bilinear form $\mathcal C$ is elliptic on $V$. Then the Lax-Milgram lemma implies that there exists a unique solution ${\bf u}$
of \eqref{curlSWeak} in $V$.
\end{proof}

Let $\boldsymbol \phi = \curl \, \curl \,{\bf u}$. We can formally rewrite the quad-curl problem \eqref{curl4S} as a second order system
\begin{subequations}\label{2System}
\begin{align}
\label{2SystemA}\curl^2\, {\boldsymbol \phi}= {\bf f},\\
\label{2SystemB}\curl^2 \, {\bf u} =\boldsymbol\phi,
\end{align}
\end{subequations}
together with the boundary conditions (\ref{curl4SBCa}) and (\ref{curl4SBCb}) for ${\bf u}$.
The mixed formulation for the quad-curl problem can then be stated as follows. Given ${\bf f}\in H(\ddiv; D) $ with $\ddiv \,{\bf f}=0$, find $({\bf u}, \boldsymbol\phi) \in Y \times X$
satisfying
\begin{subequations}\label{2SystemWeak}
\begin{align}
\label{2SystemWeakA}&(\curl  \,{\boldsymbol\phi}, \curl \, {\bf v})= ({\bf f}, {\bf v}), &\text{for all } {\bf v} \in Y,\\
\label{2SystemWeakB}&(\curl \, {\bf u}, \curl\,  {\boldsymbol \psi})  = (\boldsymbol\phi, {\boldsymbol \psi}), &\text{for all } \boldsymbol\psi \in X.
\end{align}
\end{subequations}

In the following, we derive the equivalence of the above mixed formulation to the quad-curl problem. Our argument parallels Section 7.1 of \cite{CiarletFEM} for the biharmonic
equation.
We first notice that the solution of the quad-curl problem is the solution of the following unconstrained minimization problem: Find ${\bf u}$ such that
\begin{equation}\label{CMin}
J({\bf u}) = \inf_{{\bf v} \in V} J({\bf v})
\end{equation}
where
\begin{equation}\label{Jmin}
J({\bf v}) = \frac{1}{2} \int_D |\curl^2 {\bf v}|^2 \,\text{d}x - \int_D {\bf f}\cdot {\bf v}\,\text{d}x
\end{equation}
since (\ref{2SystemWeak}) is the Euler-Lagrange equation for the minimization problem.
Equivalently we consider the constrained minimization problem associated with the quadratic form
\begin{equation}\label{CMinMix}
\mathcal{J}({\bf v}, {\boldsymbol \psi}) = \frac{1}{2} \int_D |\boldsymbol\psi|^2 \,\text{d} x - \int_D {\bf f} \cdot {\bf v}\,\text{d}x
\end{equation}
for $({\bf v}, \boldsymbol\psi) \in V \times L^2(D)^3$ such that $\curl^2 \, {\bf v} = \boldsymbol\psi$.

To derive a variational formulation, we define the space ${\mathcal V}$ as follows
\[
{\mathcal V} := \left\{ ({\bf v}, {\boldsymbol \psi}) \in V \times L^2(D)^3\;|\;
\beta (({\bf v}, {\boldsymbol\psi}), {\boldsymbol \mu}) =0, \text{ for all } {\boldsymbol \mu} \in X \right\},
\]
where
\begin{equation}\label{DefBeta}
\beta (({\bf v}, {\boldsymbol \psi}), {\boldsymbol \mu}) = \int_D \curl  {\bf v}\cdot \curl  {\boldsymbol \mu} \,\text{d}x
 - \int_D {\boldsymbol \psi}\cdot {\boldsymbol \mu}\,\text{d}x.
\end{equation}
Thus the problem can be stated as: Find $({\bf u}, {\boldsymbol\phi}) \in {\mathcal V}$ such that
\[
\int_D {\boldsymbol \phi}\cdot{\boldsymbol \psi}\,\text{d}x =  \int_D {\bf f}\cdot{\bf v}\,\text{d}x \quad \text{for all } ({\bf v}, {\boldsymbol\psi}) \in {\mathcal V}.
\]

\begin{theorem}\label{spaceV}
The mapping
\[
({\bf v}, {\boldsymbol \psi}) \in {\mathcal V} \to \|{\boldsymbol \psi}\|
\]
is a norm over the space $\mathcal V$, which is equivalent to the product norm
\[
({\bf v}, {\boldsymbol \psi}) \in {\mathcal V} \to \left(\|\curl {\bf v}\|^2+\|{\boldsymbol \psi}\|^2\right)^{1/2},
\]
and makes $\mathcal V$ a Hilbert space. In addition, we have
\[
{\mathcal V} := \left\{ ({\bf v}, {\boldsymbol \psi}) \in V \times L^2(D)^3\;|\; \curl^2 {\bf v} = {\boldsymbol \psi} \right\}.
\]
\end{theorem}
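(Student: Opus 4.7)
The plan is to prove the three assertions in order: the explicit characterization of $\mathcal V$, the norm equivalence, and the Hilbert space property.

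First, I would establish $\mathcal V = \{({\bf v}, {\boldsymbol\psi}) \in V \times L^2(D)^3 : \curl^2 {\bf v} = {\boldsymbol\psi}\}$. The inclusion ``$\supseteq$'' is immediate: if $\curl^2 {\bf v} = {\boldsymbol\psi}$, then for any ${\boldsymbol\mu} \in X$, integration by parts using $(\curl {\bf v}) \times {\boldsymbol\nu} = 0$ on $\partial D$ (which holds since ${\bf v} \in H_0^2(\curl; D)$) gives $\int_D \curl {\bf v}\cdot\curl {\boldsymbol\mu} = \int_D \curl^2{\bf v}\cdot{\boldsymbol\mu} = \int_D {\boldsymbol\psi}\cdot{\boldsymbol\mu}$, so $\beta(({\bf v}, {\boldsymbol\psi}), {\boldsymbol\mu}) = 0$. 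For the opposite inclusion, the same integration by parts rewrites the constraint as $(\curl^2 {\bf v} - {\boldsymbol\psi}, {\boldsymbol\mu}) = 0$ for every ${\boldsymbol\mu} \in X$. Testing first against ${\boldsymbol\mu} \in C_c^\infty(D)^3$ divergence-free (which lie in $X$) shows $\curl^2 {\bf v} - {\boldsymbol\psi}$ is a distributional gradient, and the further structure of $X$ together with $\ddiv(\curl^2 {\bf v}) = 0$ should force this gradient to vanish.

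Second, for the norm equivalence, the bound $\|{\boldsymbol\psi}\| \le (\|\curl {\bf v}\|^2 + \|{\boldsymbol\psi}\|^2)^{1/2}$ is trivial. For the reverse direction I would invoke the identity ${\boldsymbol\psi} = \curl^2 {\bf v}$ from step one. Observe that $\curl {\bf v}$ belongs to $Y$: $\ddiv(\curl {\bf v}) = 0$ automatically, and $(\curl {\bf v}) \times {\boldsymbol\nu} = 0$ on $\partial D$ because ${\bf v} \in H_0^2(\curl; D)$. Applying the Friedrichs inequality of Theorem~\ref{C351} to $\curl {\bf v} \in Y$ yields $\|\curl {\bf v}\| \le C\|\curl^2 {\bf v}\| = C\|{\boldsymbol\psi}\|$, so $(\|\curl {\bf v}\|^2 + \|{\boldsymbol\psi}\|^2)^{1/2} \le (1+C^2)^{1/2}\|{\boldsymbol\psi}\|$. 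That $\|{\boldsymbol\psi}\|$ is a genuine norm, not just a seminorm, follows from the same chain: $\|{\boldsymbol\psi}\|=0$ implies $\curl^2 {\bf v} = 0$, hence $\curl {\bf v} = 0$, and a second application of Friedrichs on ${\bf v} \in V \subset Y$ gives ${\bf v} = 0$.

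Third, the Hilbert space structure is essentially free. For each fixed ${\boldsymbol\mu} \in X$, the map $({\bf v}, {\boldsymbol\psi}) \mapsto \beta(({\bf v}, {\boldsymbol\psi}), {\boldsymbol\mu})$ is a continuous linear functional on $V \times L^2(D)^3$ with its natural product norm, so $\mathcal V$ is the intersection of closed hyperplanes and hence a closed subspace of this Hilbert space. The inherited inner product is equivalent, via the norm equivalence just established, to the $L^2$ inner product on the ${\boldsymbol\psi}$-component. The delicate step I anticipate is the ``$\subseteq$'' direction in the first part: since the test space $X$ consists of divergence-free fields, the orthogonality $\beta = 0$ directly controls only the solenoidal component of $\curl^2 {\bf v} - {\boldsymbol\psi}$, and ruling out a spurious gradient component will require a Helmholtz-type argument exploiting the intrinsic divergence-free character of $\curl^2 {\bf v}$.
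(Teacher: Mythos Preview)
Your arguments for the norm equivalence and the Hilbert space structure are correct, though you make them depend on the characterization, which you leave incomplete. The paper decouples these: it obtains the norm equivalence directly by testing the constraint $\beta(({\bf v},{\boldsymbol\psi}),{\boldsymbol\mu})=0$ with ${\boldsymbol\mu} = {\bf v} \in V \subset X$, giving $\|\curl {\bf v}\|^2 = ({\boldsymbol\psi},{\bf v}) \le C\|{\boldsymbol\psi}\|\,\|\curl{\bf v}\|$ via the Friedrichs inequality, so equivalence and definiteness follow without first identifying ${\boldsymbol\psi}$ with $\curl^2{\bf v}$.

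For the characterization itself, you correctly flag the inclusion ``$\subseteq$'' as the crux, but your proposed Helmholtz-type fix cannot close the gap: the condition $\ddiv(\curl^2{\bf v}) = 0$ constrains $\curl^2{\bf v}$, not ${\boldsymbol\psi}$, and with the divergence-free test space $X$ the claimed characterization is in fact false. Take ${\bf v}=0 \in V$ and ${\boldsymbol\psi}=\nabla q$ for any nonzero $q\in H_0^1(D)$; for every ${\boldsymbol\mu}\in X$ one has
\[
\beta((0,\nabla q),{\boldsymbol\mu}) = -(\nabla q,{\boldsymbol\mu}) = (q,\ddiv{\boldsymbol\mu}) = 0,
\]
so $(0,\nabla q)\in\mathcal V$ according to the displayed definition, yet $\curl^2 0 \ne \nabla q$. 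The paper's proof sidesteps this by tacitly enlarging the test space to all of $H(\curl;D)$: once gradients are admissible test functions, the spurious gradient part of $\curl^2{\bf v}-{\boldsymbol\psi}$ is killed directly, and testing against functions not in $H_0(\curl;D)$ then recovers the boundary condition on $\curl{\bf v}$. To salvage your argument you must adopt the same enlargement (or, equivalently, restrict the second factor of $\mathcal V$ to divergence-free ${\boldsymbol\psi}$); no amount of structure in $X$ alone will suffice.
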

\begin{proof} The first part follows directly from the Friedrichs inequality. We give the proof of the second part only.

For ${\bf v} \in H^2(\curl; D)$ and $\boldsymbol\mu \in H(\curl;D)$, we have that
\[
\int_D \curl  {\bf v} \cdot \curl  \boldsymbol\mu \,\text{d}x =
\int_D  \curl  \curl  {\bf v} \cdot {\boldsymbol\mu}\,\text{d}x + \int_{\partial D}  \boldsymbol\nu \times  {\bf v} \cdot \boldsymbol\mu\,\text{d}s.
\]
Let the functions ${\bf v} \in H_0^2(\curl;D)$ and $\boldsymbol\psi \in L^2(D)^3$ be related through
\[
\curl \curl  {\bf v} =\boldsymbol \psi.
\]
For any function $\boldsymbol\mu \in H(\curl;D)$, the above identity
(see Corollary 3.20 of \cite{Monk2003}) shows that
\[
\beta (({\bf v}, \boldsymbol\psi), \boldsymbol\mu) = 0 \quad \text{for all } \boldsymbol\mu \in H(\curl;D).
\]

On the other hand, let ${\bf v} \in Y$ and $\boldsymbol\psi \in L^2(D)^3$ satisfy
$\beta (({\bf v}, \boldsymbol\psi), \boldsymbol\mu) = 0$ for all $ \boldsymbol\mu \in H(\curl; D)$. In particular, we have
\begin{equation}\label{curl2}
\int_D \curl  {\bf v} \cdot \curl  \boldsymbol\mu \,\text{d}x = \int_D \boldsymbol\psi \cdot {\boldsymbol \mu}  \,\text{d}x
\end{equation}
for all $\boldsymbol\mu \in H_0(\curl; D)$. 
Thus $\curl \curl  {\bf v} = {\boldsymbol\psi}$. Using \eqref{curl2} again with $\boldsymbol\mu$ in $H(\curl; D)$,
we obtain $\boldsymbol\nu \times {\bf v}=0$ on $\partial D$.
\end{proof}

\begin{theorem}\label{QCunique}
Let ${\bf u} \in V$ denote the solution of the minimization problem \eqref{CMin}. Then we have
\begin{equation}\label{Nmin}
\mathcal{J}({\bf u}, \curl^2 \,{\bf u}) = \inf_{({\bf v}, \boldsymbol\psi) \in {\mathcal V}} \mathcal{J}({\bf v}, \boldsymbol\psi).
\end{equation}
In addition, the pair $({\bf u}, \curl^2\,{\bf u}) \in \mathcal{V}$ is the unique solution of the minimization problem \eqref{Nmin}.
\end{theorem}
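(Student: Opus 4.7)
The plan is to exploit the characterization of $\mathcal{V}$ given in Theorem~\ref{spaceV}, namely
\[
\mathcal{V} = \left\{ ({\bf v}, \boldsymbol\psi) \in V \times L^2(D)^3 \;|\; \curl^2 {\bf v} = \boldsymbol\psi \right\},
\]
which says that membership in $\mathcal V$ forces $\boldsymbol\psi$ to be determined by ${\bf v}$. This gives a natural bijection $T : V \to \mathcal{V}$ defined by $T({\bf v}) = ({\bf v}, \curl^2 {\bf v})$, and this is the map I would use to transport the unconstrained problem \eqref{CMin} to the constrained problem \eqref{Nmin}.

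The first step is to compare the two functionals under $T$. Directly from the definitions \eqref{Jmin} and \eqref{CMinMix},
\[
\mathcal{J}({\bf v}, \curl^2 {\bf v}) = \tfrac{1}{2}\int_D |\curl^2 {\bf v}|^2 \,\text{d}x - \int_D {\bf f}\cdot {\bf v}\,\text{d}x = J({\bf v}) \quad \text{for all } {\bf v} \in V.
\]
Since $T$ is a bijection from $V$ to $\mathcal V$, this identity immediately yields
\[
\inf_{({\bf v}, \boldsymbol\psi) \in \mathcal{V}} \mathcal{J}({\bf v}, \boldsymbol\psi) = \inf_{{\bf v} \in V} \mathcal{J}({\bf v}, \curl^2 {\bf v}) = \inf_{{\bf v} \in V} J({\bf v}) = J({\bf u}) = \mathcal{J}({\bf u}, \curl^2 {\bf u}),
\]
which is the first assertion.

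For uniqueness of the minimizer in $\mathcal V$, I would argue as follows. Any minimizer $({\bf v}^\ast, \boldsymbol\psi^\ast)$ of $\mathcal{J}$ over $\mathcal{V}$ satisfies $\boldsymbol\psi^\ast = \curl^2 {\bf v}^\ast$ by Theorem~\ref{spaceV}, so it is of the form $T({\bf v}^\ast)$, and then ${\bf v}^\ast$ minimizes $J$ over $V$. Uniqueness therefore reduces to uniqueness of the solution of \eqref{CMin}, which follows from the strict convexity of $J$: this in turn is a consequence of the ellipticity of the bilinear form $\mathcal{C}$ on $V$ already established by two applications of the Friedrichs inequality (Theorem~\ref{C351}) in the previous theorem. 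Combining this with ${\bf u}$ being the unique minimizer of $J$ gives $({\bf u}, \curl^2 {\bf u})$ as the unique minimizer in $\mathcal V$.

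There is no real obstacle here beyond carefully invoking Theorem~\ref{spaceV}; the only point that needs to be verified cleanly is that the bijection $T$ is well defined in both directions, i.e.\ that $\curl^2 {\bf v} \in L^2(D)^3$ whenever ${\bf v} \in V$ (which is built into the definition of $V$ via $H_0^2(\curl;D)$), and that every element of $\mathcal V$ arises this way (which is exactly the second part of Theorem~\ref{spaceV}).
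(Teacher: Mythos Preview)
Your argument is correct, and it differs from the paper's route in a mild but genuine way. The paper works directly on $\mathcal{V}$: it verifies that the symmetric bilinear form $\bigl(({\bf u},\boldsymbol\phi),({\bf v},\boldsymbol\psi)\bigr)\mapsto \int_D \boldsymbol\phi\cdot\boldsymbol\psi\,\mathrm{d}x$ is continuous and $\mathcal{V}$-elliptic (with respect to the norm supplied by Theorem~\ref{spaceV}), applies a Lax--Milgram argument to obtain a unique minimizer $({\bf u}^*,\boldsymbol\phi)$, and only then invokes Theorem~\ref{spaceV} to identify $\boldsymbol\phi=\curl^2{\bf u}^*$ and hence ${\bf u}^*={\bf u}$. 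You instead use Theorem~\ref{spaceV} at the outset to set up the bijection $T:V\to\mathcal{V}$, observe $\mathcal{J}\circ T=J$, and reduce everything to the already-established uniqueness of the minimizer of $J$ on $V$. Your route is a bit more economical since it recycles the ellipticity of $\mathcal{C}$ on $V$ rather than re-proving ellipticity on $\mathcal{V}$; the paper's route has the minor advantage of exhibiting the Euler--Lagrange equation $\int_D\boldsymbol\phi\cdot\boldsymbol\psi\,\mathrm{d}x=\int_D{\bf f}\cdot{\bf v}\,\mathrm{d}x$ on $\mathcal{V}$ explicitly, which is used later in the error analysis.
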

\begin{proof}
The symmetric bilinear form
\[
\left( ({\bf u},\boldsymbol\phi), ({\bf v}, \boldsymbol\psi) \right) \in \mathcal{V} \times \mathcal{V}  \to \int_D \boldsymbol\phi\cdot \boldsymbol\psi \, \text{d}x
\]
is continuous and $\mathcal{V}$-elliptic, and the linear form
\[
({\bf v}, \boldsymbol\psi) \in \mathcal{V} \to \int_D {\bf f}\cdot{\bf v} \, \text{d}x
\]
is continuous. Thus the minimization problem: find $({\bf u}^*, \boldsymbol\phi) \in \mathcal{V}$ such that
\[
\mathcal{J}({\bf u}^*, \boldsymbol\phi) = \inf_{({\bf v}, \boldsymbol\psi) \in \mathcal{V}} \mathcal{J}({\bf v}, \boldsymbol\psi)
\]
has a unique solution, which is also the solution of
\[
\int_D \boldsymbol\phi \cdot \boldsymbol\psi \, \text{d} x = \int_D {\bf f} \cdot {\bf v} \, \text{d}x \quad \text{for all } ({\bf v}, \boldsymbol\psi) \in \mathcal{V}.
\]
From Theorem \ref{spaceV}, we see that ${\bf u}^* \in  V$ and that $\curl^2 {\bf u}^* = \boldsymbol\phi$. We have
\[
\int_D \curl^2 {\bf u} \cdot \curl^2 {\bf v} \, \text{d} x = \int_D {\bf f}\cdot{\bf v} \, \text{d}x,
\]
and thus ${\bf u}^*$ coincides with the solution ${\bf u}$ of \eqref{CMin}.
\end{proof}

Based on the mixed formulation introduced in the previous section, we now present a finite element method for the
quad-curl problem.
Let
\[
{\mathcal V}_h = \left\{ ({\bf v}_h, \boldsymbol\psi_h) \in Y_h \times X_h\;|
\; \beta (({\bf v}_h, \boldsymbol\psi_h), \boldsymbol\mu_h) = 0 \quad \text{for all } \boldsymbol\mu_h \in X_h\right\}
\]
where
\[
X_{h} = \left\{ {\bf u}_h \in U_h \;|\; ({\bf u}_h, \grad \boldsymbol \xi_h) = 0 \quad \text{ for all } \boldsymbol\xi_h \in S_h\right\}.
\]
Note that $ X_{h}  \not \subset X$. Functions in $X_h$ are said to be discrete divergence free.
The discrete problem corresponding to \eqref{CMinMix} is to find $({\bf u}_h, \boldsymbol\phi_h) \in {\mathcal V}_h$ such that
\begin{equation}\label{minmixD}
\mathcal{J}({\bf u}_h, \boldsymbol\phi_h) = \inf_{({\bf v}_h, \boldsymbol\psi_h) \in \mathcal{V}_h} \mathcal{J}({\bf v}_h, \boldsymbol\psi_h)
\end{equation}
\begin{theorem}
The discrete problem \eqref{minmixD} has a unique solution.
\end{theorem}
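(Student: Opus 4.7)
The plan is to mirror the proof of Theorem \ref{QCunique} at the discrete level, using the Lax--Milgram machinery on the subspace $\mathcal{V}_h$. The linear functional $({\bf v}_h,\boldsymbol\psi_h)\mapsto \int_D {\bf f}\cdot {\bf v}_h\,\text{d}x$ is obviously continuous on $\mathcal{V}_h$, and the symmetric bilinear form $\left(({\bf u}_h,\boldsymbol\phi_h),({\bf v}_h,\boldsymbol\psi_h)\right)\mapsto (\boldsymbol\phi_h,\boldsymbol\psi_h)$ is continuous with respect to the graph norm $(\|\curl{\bf v}_h\|^2+\|\boldsymbol\psi_h\|^2)^{1/2}$; everything hinges on $\mathcal{V}_h$-ellipticity of this form.

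The key preliminary observation is that $Y_h \subset X_h$: any ${\bf v}_h\in Y_h$ lies in $U_{0,h}\subset U_h$ and is discrete divergence free by the very definition of $Y_h$, which is exactly the defining condition of $X_h$. Consequently, given $({\bf v}_h,\boldsymbol\psi_h)\in\mathcal{V}_h$, the element ${\bf v}_h$ itself is a legal test function $\boldsymbol\mu_h\in X_h$ in the constraint
\[
\beta(({\bf v}_h,\boldsymbol\psi_h),\boldsymbol\mu_h)=(\curl{\bf v}_h,\curl\boldsymbol\mu_h)-(\boldsymbol\psi_h,\boldsymbol\mu_h)=0.
\]
Taking $\boldsymbol\mu_h={\bf v}_h$ yields $\|\curl{\bf v}_h\|^2=(\boldsymbol\psi_h,{\bf v}_h)\le \|\boldsymbol\psi_h\|\,\|{\bf v}_h\|$.

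Now I invoke the discrete Friedrichs inequality on $Y_h$ (Theorem \ref{fried}), which gives $\|{\bf v}_h\|\le C\|\curl{\bf v}_h\|$ for $h$ small. Combining, $\|\curl{\bf v}_h\|\le C\|\boldsymbol\psi_h\|$, whence
\[
\|\curl{\bf v}_h\|^2+\|\boldsymbol\psi_h\|^2\le (C^2+1)\|\boldsymbol\psi_h\|^2.
\]
This is the sought $\mathcal{V}_h$-ellipticity of $(\boldsymbol\phi_h,\boldsymbol\psi_h)\mapsto(\boldsymbol\phi_h,\boldsymbol\psi_h)$ with respect to the product norm. Lax--Milgram (or equivalently, the abstract quadratic minimization theorem applied as in Theorem \ref{QCunique}) then yields a unique $({\bf u}_h,\boldsymbol\phi_h)\in\mathcal{V}_h$ solving \eqref{minmixD}.

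The main obstacle, and the only nontrivial point, is the ellipticity step: in the continuous setting one can freely test with elements whose curl lives in the right space, but in the discrete setting one must verify that ${\bf v}_h$ is an admissible test function $\boldsymbol\mu_h$. The inclusion $Y_h\subset X_h$ together with the discrete compactness/Friedrichs property of $Y_h$ (which crucially requires $h$ to be sufficiently small) is what makes the argument go through; without these ingredients the uniqueness/ellipticity would not follow from the purely algebraic constraint.
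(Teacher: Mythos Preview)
Your proof is correct and follows essentially the same approach as the paper: test the constraint with $\boldsymbol\mu_h={\bf v}_h$, apply the discrete Friedrichs inequality from Theorem~\ref{fried} to obtain $\|\curl{\bf v}_h\|\le C\|\boldsymbol\psi_h\|$, deduce that $\|\boldsymbol\psi_h\|$ is an equivalent norm on $\mathcal V_h$, and conclude via Lax--Milgram as in Theorem~\ref{QCunique}. Your explicit remark that $Y_h\subset X_h$ (needed to justify taking $\boldsymbol\mu_h={\bf v}_h$) is a point the paper uses silently.
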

\begin{proof}
As in the case of the continuous weak formulation, we consider the mapping
\begin{equation}\label{mappingD}
({\bf v}_h, \boldsymbol\psi_h) \in {\mathcal V}_h \to \|\boldsymbol\psi_h\|.
\end{equation}
Suppose $({\bf v}_h, \boldsymbol\psi_h) \in {\mathcal V}_h$ satisfies
\[
\beta (({\bf v}_h, \boldsymbol\psi_h), \boldsymbol\mu_h) = 0 \quad \text{for all } \boldsymbol\mu_h \in X_h.
\]
Choosing $\boldsymbol\mu_h = {\bf v}_h$ we obtain
\begin{equation}\label{curlvh}
\|\curl  {\bf v}_h\|^2 = \int_D \boldsymbol\psi_h \cdot {\bf v}_h \, \text{d}x \le \|\boldsymbol\psi_h\| \|{\bf v}_h\|.
\end{equation}
Applying the discrete Friedrichs inequality, we have
\[
\|\curl  {\bf v}_h\|^2 \le C \|\boldsymbol\psi_h\|\, \|\curl  {\bf v}_h\|.
\]
Therefore,
\[
\left( \|\curl   {\bf v}_h\|^2+\|\boldsymbol\psi_h\|^2 \right)^{1/2} \le (C^2+1)^{1/2} \|\boldsymbol\psi_h\|.
\]
Hence the mapping \eqref{mappingD} is a norm over ${\mathcal V}_h$.
The rest of the proof follows by an argument similar to that used for the proof of Theorem \ref{QCunique}.
\end{proof}
As a consequence of the above theorem, the element $({\bf u}_h, \boldsymbol\phi_h) \in {\mathcal V}_h$ satisfies
\begin{equation}\label{uhphih}
\int_D {\bf \boldsymbol\phi}_h \cdot  {\bf \boldsymbol\psi}_h \, \text{d}x = \int_D {\bf f} \cdot {\bf v}_h \,\text{d}x
\quad \text{for all } ({\bf v}_h, \boldsymbol\psi_h) \in {\mathcal V}_h.
\end{equation}

\begin{theorem}
Let $({\bf u}, \boldsymbol\phi)$ and $({\bf u}_h, \boldsymbol\phi_h)$ be the solutions of \eqref{Nmin} and \eqref{minmixD}, respectively, and
assume that ${\bf u}\in H^3(\curl; D)$.
There exists a constant $C$ independent of $h$ such that
\begin{eqnarray} \label{ErrEstA}
&&\|\curl   {\bf u} - \curl   {\bf u}_h\| + \|\curl^2 {\bf u} - \boldsymbol\phi_h\|  \le \\
 \nonumber&& C \left( \inf_{({\bf v}_h, \boldsymbol\psi_h) \in {\mathcal V}_h}
\left( \|\curl  {\bf u} - \curl  {\bf v}_h \|+
\| \curl^2 {\bf u} - \boldsymbol\psi_h \|\right)
+ \inf_{\boldsymbol\mu_h \in X_h} \| \curl^2 \,{\bf u} - \boldsymbol\mu_h \|_{H(\curl)}\right).\nonumber
\end{eqnarray}
\end{theorem}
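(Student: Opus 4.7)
The plan is to carry out a C\'ea-type argument adapted to the non-conforming setting (since $X_h \not\subset X$ and hence $\mathcal{V}_h \not\subset \mathcal{V}$), using the norm equivalence on $\mathcal{V}_h$ that was established in the proof of the discrete well-posedness: for any $({\bf w}_h, \boldsymbol\eta_h) \in \mathcal{V}_h$,
\[
\|\curl {\bf w}_h\|^2 + \|\boldsymbol\eta_h\|^2 \le C \|\boldsymbol\eta_h\|^2.
\]
Pick arbitrary $({\bf v}_h, \boldsymbol\psi_h) \in \mathcal{V}_h$ and $\boldsymbol\mu_h \in X_h$. Since $\mathcal{V}_h$ is a linear space, $({\bf u}_h - {\bf v}_h,\, \boldsymbol\phi_h - \boldsymbol\psi_h) \in \mathcal{V}_h$, so it suffices to control $\|\boldsymbol\phi_h - \boldsymbol\psi_h\|$ and recover the remaining error via triangle inequality at the end.

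First I would square the quantity of interest and use the discrete Euler--Lagrange identity \eqref{uhphih} (with the admissible test pair $({\bf u}_h - {\bf v}_h, \boldsymbol\phi_h - \boldsymbol\psi_h)$) to write
\[
\|\boldsymbol\phi_h - \boldsymbol\psi_h\|^2 = ({\bf f},\, {\bf u}_h - {\bf v}_h) - (\boldsymbol\psi_h,\, \boldsymbol\phi_h - \boldsymbol\psi_h),
\]
then insert $\boldsymbol\phi$ to split this as
\[
\|\boldsymbol\phi_h - \boldsymbol\psi_h\|^2 = \underbrace{({\bf f},\, {\bf u}_h - {\bf v}_h) - (\boldsymbol\phi,\, \boldsymbol\phi_h - \boldsymbol\psi_h)}_{=: A} + (\boldsymbol\phi - \boldsymbol\psi_h,\, \boldsymbol\phi_h - \boldsymbol\psi_h).
\]
The second piece is immediately controlled by $\|\boldsymbol\phi - \boldsymbol\psi_h\|\,\|\boldsymbol\phi_h - \boldsymbol\psi_h\|$, which is the first ingredient of the RHS.

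The main work is in rewriting $A$ so that the approximation term $\boldsymbol\phi - \boldsymbol\mu_h$ in $H(\curl)$ appears. The idea is to integrate by parts on the continuous side. Since $\curl^4 {\bf u} = {\bf f}$ and ${\bf u}_h - {\bf v}_h \in Y_h \subset H_0(\curl;D)$, the IBP boundary term involving $\boldsymbol\nu \times ({\bf u}_h - {\bf v}_h)$ vanishes, giving $({\bf f},\, {\bf u}_h - {\bf v}_h) = (\curl \boldsymbol\phi,\, \curl ({\bf u}_h - {\bf v}_h))$. Similarly, writing $\boldsymbol\phi = \curl \curl {\bf u}$ and using the other boundary condition $\curl{\bf u} \times \boldsymbol\nu = 0$ removes the boundary contribution in integration by parts against any $H(\curl;D)$ function, yielding $(\boldsymbol\phi,\, \boldsymbol\phi_h - \boldsymbol\psi_h) = (\curl {\bf u},\, \curl(\boldsymbol\phi_h - \boldsymbol\psi_h))$. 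Then I insert $\boldsymbol\mu_h \in X_h$ and exploit the discrete orthogonality
\[
(\curl({\bf u}_h - {\bf v}_h),\, \curl \boldsymbol\mu_h) = (\boldsymbol\phi_h - \boldsymbol\psi_h,\, \boldsymbol\mu_h),
\]
coming from $({\bf u}_h - {\bf v}_h, \boldsymbol\phi_h - \boldsymbol\psi_h) \in \mathcal{V}_h$, together with the continuous relation $(\curl {\bf u},\, \curl \boldsymbol\mu_h) = (\boldsymbol\phi, \boldsymbol\mu_h)$ (again by IBP, since $\curl {\bf u} \times \boldsymbol\nu = 0$). After careful cancellation, the residual reduces to
\[
A = (\curl(\boldsymbol\phi - \boldsymbol\mu_h),\, \curl({\bf u}_h - {\bf v}_h)) - (\boldsymbol\phi - \boldsymbol\mu_h,\, \boldsymbol\phi_h - \boldsymbol\psi_h),
\]
which Cauchy--Schwarz bounds by $\|\boldsymbol\phi - \boldsymbol\mu_h\|_{H(\curl;D)}\left(\|\curl({\bf u}_h - {\bf v}_h)\| + \|\boldsymbol\phi_h - \boldsymbol\psi_h\|\right)$.

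To finish, I would apply the norm equivalence to collapse $\|\curl({\bf u}_h - {\bf v}_h)\| + \|\boldsymbol\phi_h - \boldsymbol\psi_h\|$ into $C\|\boldsymbol\phi_h - \boldsymbol\psi_h\|$, divide through, and then use the triangle inequality in both the $\curl$-seminorm of ${\bf u}$ and the $L^2$-norm of $\boldsymbol\phi$ to obtain the stated bound after taking the infima over $\mathcal{V}_h$ and $X_h$. The hard step is the cancellation in Step 3: the discrete space is not contained in the continuous one, so a naive subtraction of weak formulations is not available, and we must both integrate by parts (consuming the boundary conditions of the original problem) and insert $\boldsymbol\mu_h \in X_h$ simultaneously so that the cross terms reorganize into purely approximation-type quantities. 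This is precisely what produces the second infimum over $X_h$ in the $H(\curl)$ norm, which is the consistency error characteristic of the non-conforming mixed setting.
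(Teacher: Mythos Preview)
Your proposal is correct and follows essentially the same approach as the paper. The paper organizes the computation by evaluating the bilinear form $\beta\bigl(({\bf u}_h-{\bf v}_h,\boldsymbol\phi_h-\boldsymbol\psi_h),\,\curl^2{\bf u}-\boldsymbol\mu_h\bigr)$ directly and showing it equals $-\int_D(\curl^2{\bf u}-\boldsymbol\phi_h)\cdot(\boldsymbol\phi_h-\boldsymbol\psi_h)\,\mathrm{d}x$, whereas you start from $\|\boldsymbol\phi_h-\boldsymbol\psi_h\|^2$, isolate the consistency term $A$, and reduce it to the same expression; the integration by parts using ${\bf u}\in H^3(\curl;D)$, the discrete constraint cancellation, the discrete Friedrichs bound $\|\curl({\bf u}_h-{\bf v}_h)\|\le C\|\boldsymbol\phi_h-\boldsymbol\psi_h\|$, and the final triangle inequality are identical in both arguments. (One small remark: your detour through $(\boldsymbol\phi,\boldsymbol\phi_h-\boldsymbol\psi_h)=(\curl{\bf u},\curl(\boldsymbol\phi_h-\boldsymbol\psi_h))$ is valid but unnecessary, since inserting $\boldsymbol\mu_h$ directly into the $L^2$ pairing already produces the required cancellation.)
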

\begin{proof}
Using the assumption that ${\bf u} \in H^3(\curl; D)$,  we have
\[
\int_D  \curl  ( \curl^2  {\bf u})\cdot \curl  {\bf v} \, \text{d}x
 = \int_D \curl^2  {\bf u} \cdot \curl^2  {\bf v}\, \text{d}x
  = \int_D {\bf f}\cdot {\bf v} \, \text{d}x,
\]
for all ${\bf v} \in {\mathcal D}(D)^3$, the space of smooth functions with compact support in $D$. Hence for all ${\bf v} \in H_0(\curl;D)$, the following
holds 
\begin{equation}\label{allv}
\int_D  \curl  ( \curl^2  {\bf u})\cdot \curl  {\bf v} \, \text{d}x
 =\int_D {\bf f} \cdot {\bf v} \, \text{d}x.
\end{equation}
Thus for any ${\bf v} \in H_0(\curl;D)$ and $\boldsymbol\psi \in L^2(D)^3$, we have
\[
\beta\left(({\bf v}, \boldsymbol\psi), \curl^2 {\bf u} \right)
= \int_D {\bf f}\cdot {\bf v} \, \text{d}x - \int_D \boldsymbol\psi \cdot \curl^2 {\bf u} \, \text{d}x.
\]
For any $({\bf v}_h, \boldsymbol\psi_h) \in {\mathcal V}_h$ and $\boldsymbol\mu_h \in X_h$, using the fact that
$\beta( ({\bf v}_h, \boldsymbol\psi_h), \boldsymbol\mu_h ) = 0$, \eqref{allv}, and \eqref{uhphih},
we have
\begin{eqnarray}
&& \beta\left( ({\bf u}_h - {\bf v}_h, \boldsymbol\phi_h - \boldsymbol\psi_h), \curl^2 {\bf u} - \boldsymbol\mu_h\right)\label{buh}\\
&=& \int_D \curl  ({\bf u}_h - {\bf v}_h) \cdot \curl  (\curl^2 {\bf u} - \boldsymbol\mu_h) \, \text{d}x\nonumber\\
&& \qquad \qquad    - \int_D (\boldsymbol\phi_h - \boldsymbol\psi_h) \cdot (\curl^2 {\bf u} - \boldsymbol\mu_h) \, \text{d}x \nonumber\\
& =&  \int_D \curl  ({\bf u}_h - {\bf v}_h) \cdot \curl  \curl^2 {\bf u}  \, \text{d}x
-\int_D \curl  ({\bf u}_h - {\bf v}_h) \cdot \curl  \boldsymbol\mu_h \, \text{d}x \nonumber\\
&& \qquad \qquad - \int_D (\boldsymbol\phi_h-\boldsymbol\psi_h) \cdot \curl^2 {\bf u}   \, \text{d}x + \int_D  (\boldsymbol\phi_h - \boldsymbol\psi_h) \cdot \boldsymbol\mu_h  \, \text{d}x  \nonumber\\
&=&  \int_D \curl  ({\bf u}_h - {\bf v}_h) \cdot \curl  \curl^2 {\bf u}  \, \text{d}x  - \int_D (\boldsymbol\phi_h-\boldsymbol\psi_h) \cdot \curl^2 {\bf u}   \, \text{d}x \nonumber\\
&=& \int_D {\bf f} \cdot ({\bf u}_h - {\bf v}_h) \, \text{d}x  - \int_D (\boldsymbol\phi_h-\boldsymbol\psi_h) \cdot \curl^2 {\bf u}   \, \text{d}x \nonumber\\
&=& \int_D \boldsymbol\phi_h \cdot (\boldsymbol\phi_h - \boldsymbol\psi_h)  \, \text{d}x  - \int_D (\boldsymbol\phi_h-\boldsymbol\psi_h) \cdot \curl^2 {\bf u}   \, \text{d}x\nonumber \\
&=& - \int_D (\curl^2{\bf u} - \boldsymbol\phi_h)\cdot(\boldsymbol\phi_h - \boldsymbol\psi_h) \, \text{d}x. \nonumber
\end{eqnarray}
On the other hand, for all $\boldsymbol\mu_h \in X_h$, one has
\begin{eqnarray*}
&& \int_D \curl  {\bf u}_h \cdot \curl  \boldsymbol\mu_h \text{d}\,x = \int_D \boldsymbol\phi_h \cdot \boldsymbol\mu_h \text{d}x, \\
&& \int_D \curl  {\bf v}_h \cdot \curl  \boldsymbol\mu_h \text{d}\,x = \int_D \boldsymbol\psi_h \cdot \boldsymbol\mu_h \text{d}x.
\end{eqnarray*}
Taking the difference and letting $\boldsymbol\mu_h = {\bf u}_h - {\bf v}_h$, we have
\[
\int_D \curl  ({\bf u}_h - {\bf v}_h) \cdot \curl  ({\bf u}_h - {\bf v}_h) \text{d}x = \int_D (\boldsymbol\phi_h - \boldsymbol\psi_h) \cdot  ({\bf u}_h - {\bf v}_h)  \text{d}x
\]
which implies
\begin{equation}\label{uhvh}
\|  \curl  ({\bf u}_h - {\bf v}_h) \| \le C \| \boldsymbol\phi_h -\boldsymbol \psi_h\|
\end{equation}
where $C$ is the constant in the discrete Friedrichs inequality.

Using the above inequality and the definition of $\beta(\cdot,\cdot)$ together with \eqref{buh}, we have
\begin{eqnarray*}
&&\left| \int_D (\curl^2{\bf u} - \boldsymbol\phi_h)\cdot(\boldsymbol\phi_h - \boldsymbol\psi_h) \, \text{d}x \right| \\
&=& \left| \beta\left( ({\bf u}_h - {\bf v}_h, \boldsymbol\phi_h - \boldsymbol\psi_h), \curl^2 {\bf u} - \boldsymbol\mu_h\right) \right| \\
&\le& \| \curl  ({\bf u}_h - {\bf v}_h)\| \, \| \curl  (\curl^2 {\bf u} - \boldsymbol\mu_h)\|
+\| \boldsymbol\phi_h - \boldsymbol\psi_h \| \, \| \curl^2 {\bf u} - \boldsymbol\mu_h \| \\
&\le& C \| \boldsymbol\phi_h - \boldsymbol\psi_h \| \, \| \curl  (\curl^2 {\bf u} - \boldsymbol\mu_h)\| +
\| \boldsymbol\phi_h - \boldsymbol\psi_h \| \, \| \curl^2 {\bf u} - \boldsymbol\mu_h \| \\
&\le& C_1 \| \boldsymbol\phi_h - \boldsymbol\psi_h \| \, \| \curl^2 {\bf u} - \boldsymbol\mu_h \|_{H(\curl)}
\end{eqnarray*}
where $C_1 = \max\{C, 1\}$.
Thus we have that
\begin{eqnarray*}
&&\| \boldsymbol\phi_h - \boldsymbol\psi_h \|^2 \\
&=& -\int_D (\boldsymbol\phi_h - \boldsymbol\psi_h)\cdot(\curl^2 {\bf u} - \boldsymbol\phi_h)\,\text{d}x
+ \int_D (\boldsymbol\phi_h - \boldsymbol\psi_h)\cdot(\curl^2 {\bf u} - \boldsymbol\psi_h)\,\text{d}x \\
&\le& C_1 \| \boldsymbol\phi_h - \boldsymbol\psi_h \| \, \| \curl^2 {\bf u} - \boldsymbol\mu_h \|_{H(\curl)}
 + \| \boldsymbol\phi_h - \boldsymbol\psi_h \| \,\| \curl^2 {\bf u} - \boldsymbol\psi_h \|
\end{eqnarray*}
and hence
\[
\| \boldsymbol\phi_h - \boldsymbol\psi_h \| \le C_1\| \curl^2 {\bf u} - \boldsymbol\mu_h \|_{H(\curl)} +
\| \curl^2 {\bf u} - \boldsymbol\psi_h \|.
\]
Moreover, we have that
\begin{eqnarray*}
&&\| \curl  {\bf u} - \curl  {\bf u}_h \| + \| \curl^2 {\bf u} - \boldsymbol\phi_h \| \\
&\le& \| \curl  {\bf u} - \curl  {\bf v}_h \| + \| \curl  {\bf v}_h - \curl  {\bf u}_h \|
 + \| \curl^2 {\bf u} - \boldsymbol\psi_h \| + \| \boldsymbol\psi_h - \boldsymbol\phi_h\| \\
&\le& \| \curl  {\bf u} - \curl  {\bf v}_h \| + \| \curl^2 {\bf u} - \boldsymbol\psi_h \|
 +(1+C) \| \boldsymbol\psi_h - \boldsymbol\phi_h\|
\end{eqnarray*}
where we have used \eqref{uhvh}.
Combining the above inequalities, we obtain
\begin{eqnarray*}
&& \| \curl  {\bf u} - \curl  {\bf u}_h \| + \| \curl^2 {\bf u} - \boldsymbol\phi_h \| \\
&\le& \|\curl  {\bf u} - \curl  {\bf v}_h \| + \| \curl^2 {\bf u} - \boldsymbol\psi_h \| \\
&& \qquad \qquad     +(1+C) \left( C_1\| \curl^2 {\bf u} - \boldsymbol\mu_h \|_{H(\curl)} +
\| \curl^2 {\bf u} - \boldsymbol\psi_h) \| \right) \\
&\le& \|\curl  {\bf u} - \curl  {\bf v}_h \| + (2+C) \| \curl^2 {\bf u} - \boldsymbol\psi_h \| 
 +(1+C)C_1 \| \curl^2\, {\bf u} - \boldsymbol\mu_h \|_{H(\curl)}
\end{eqnarray*}
which completes the proof by taking the infimum of the right side of the above inequality over all $({\bf v}_h,\boldsymbol{\psi}_h)\in {\cal V}_h$ 
and ${\boldsymbol\mu}_h\in X_h$.
\end{proof}

\begin{theorem}\label{ErrorEstC}
Let $({\bf u}, \boldsymbol\phi)$ and $({\bf u}_h, \boldsymbol\phi_h)$ be the solutions of \eqref{Nmin} and \eqref{minmixD}, respectively.
Let $\alpha(h)=C_1/h$ where $C_1$ is the constant in Theorem \ref{thesis413}.
Then there exists a constant $C$ independent of the mesh size $h$ such that
\begin{eqnarray*}
&&\|\curl  {\bf u} - \curl  {\bf u}_h\| + \|\curl^2 \, {\bf u} - \boldsymbol\phi_h\| \\
&& \qquad \le C \left\{ \left( 1+\alpha(h) \right) \inf_{{\bf v}_h \in X_h} \|\curl  {\bf u} - \curl  {\bf v}_h\|
+ \inf_{\boldsymbol\mu_h \in X_h} \|\curl^2 \,{\bf u} + \boldsymbol\mu_h\|_{H(\curl)} \right\}.
\end{eqnarray*}
\end{theorem}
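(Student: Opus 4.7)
My plan is to apply the quasi-optimal bound from the previous theorem to a carefully tailored admissible pair $({\bf v}_h,\boldsymbol\psi_h) \in \mathcal{V}_h$ and then estimate the two coordinates separately using the inverse inequality. Given any candidate ${\bf v}_h \in X_h$, I would construct its companion $\boldsymbol\psi_h \in X_h$ as the unique solution of
\[
(\boldsymbol\psi_h, \boldsymbol\tau_h) \;=\; (\curl {\bf v}_h,\curl \boldsymbol\tau_h) \qquad \text{for all } \boldsymbol\tau_h \in X_h,
\]
which is well-posed because the $L^2$ inner product is coercive on $X_h$. By construction $\beta(({\bf v}_h, \boldsymbol\psi_h), \boldsymbol\tau_h) = 0$ for every $\boldsymbol\tau_h \in X_h$, so the pair lies in $\mathcal{V}_h$.

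To control $\|\curl^2 {\bf u} - \boldsymbol\psi_h\|$, I would fix $\boldsymbol\mu_h \in X_h$, test the defining equation with $\boldsymbol\tau_h = \boldsymbol\psi_h - \boldsymbol\mu_h$, and rewrite the right-hand side after integrating $(\curl {\bf u}, \curl(\boldsymbol\psi_h - \boldsymbol\mu_h))$ by parts. Since $(\curl {\bf u}) \times \boldsymbol\nu = 0$ on $\partial D$ the boundary term vanishes, yielding
\[
\|\boldsymbol\psi_h - \boldsymbol\mu_h\|^2 = (\curl ({\bf v}_h - {\bf u}), \curl(\boldsymbol\psi_h - \boldsymbol\mu_h)) + (\curl^2 {\bf u} - \boldsymbol\mu_h,\,\boldsymbol\psi_h - \boldsymbol\mu_h).
\]
Applying Cauchy--Schwarz and Theorem~\ref{thesis413} to the edge-element function $\boldsymbol\psi_h - \boldsymbol\mu_h$ converts $\|\curl(\boldsymbol\psi_h - \boldsymbol\mu_h)\|$ into $\alpha(h)\|\boldsymbol\psi_h - \boldsymbol\mu_h\|$, and dividing through gives
\[
\|\boldsymbol\psi_h - \boldsymbol\mu_h\| \;\le\; \alpha(h)\,\|\curl ({\bf u}-{\bf v}_h)\| + \|\curl^2 {\bf u} - \boldsymbol\mu_h\|.
\]
A triangle inequality then provides the same type of upper bound, up to constants, for $\|\curl^2 {\bf u} - \boldsymbol\psi_h\|$.

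Plugging these estimates into the bound of the previous theorem, the term $\|\curl {\bf u} - \curl {\bf v}_h\|$ is preserved while $\|\curl^2 {\bf u} - \boldsymbol\psi_h\|$ is replaced by $\alpha(h)\|\curl ({\bf u}-{\bf v}_h)\| + \|\curl^2 {\bf u} - \boldsymbol\mu_h\|$; absorbing the latter into $\|\curl^2 {\bf u} - \boldsymbol\mu_h\|_{H(\curl)}$ and taking infima over ${\bf v}_h \in X_h$ and $\boldsymbol\mu_h \in X_h$ yields the claim. \textbf{The main obstacle} is the integration-by-parts step: the difference $\boldsymbol\psi_h - \boldsymbol\mu_h$ lives in $X_h$ and carries no tangential trace condition, so the boundary term cannot be suppressed by the test function and must instead be killed by the regularity of the exact solution, i.e., by $(\curl {\bf u})\times \boldsymbol\nu = 0$ inherited from ${\bf u}\in V$. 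The factor $\alpha(h) = C_1/h$ is intrinsic, since we are trading a $\curl$-norm error bound for an $L^2$-norm bound on a finite element function; any cleaner construction would demand more than the ingredients supplied by Theorems~\ref{thesis413} and the previous quasi-optimal estimate.
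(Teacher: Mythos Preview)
Your argument is essentially the same as the paper's. The paper also reduces \eqref{ErrEstA} to an estimate on $\|\curl^2{\bf u}-\boldsymbol\psi_h\|$ by testing the constraint in $\mathcal V_h$ with ${\bf w}_h=\boldsymbol\mu_h+\boldsymbol\psi_h$, integrating by parts using $(\curl{\bf u})\times\boldsymbol\nu=0$, and then invoking the inverse inequality of Theorem~\ref{thesis413}; your test function $\boldsymbol\tau_h=\boldsymbol\psi_h-\boldsymbol\mu_h$ is the same object up to the sign of $\boldsymbol\mu_h$. The only cosmetic difference is that you make the companion $\boldsymbol\psi_h$ explicit via the $L^2$ Riesz map on $X_h$, whereas the paper simply takes an arbitrary pair $({\bf v}_h,\boldsymbol\psi_h)\in\mathcal V_h$; since that companion is unique, the two formulations coincide. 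One small caveat: for the constructed pair to lie in $\mathcal V_h\subset Y_h\times X_h$ you need ${\bf v}_h\in Y_h$, not merely ${\bf v}_h\in X_h$; the paper's statement has the same imprecision, and it is harmless for the application in Theorem~\ref{thm}.
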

\begin{proof}
Let $({\bf v}_h, \boldsymbol\psi_h) \in {\mathcal V}_h$ and $\boldsymbol\mu_h \in X_h$ be arbitrary. Letting ${\bf w}_h = \boldsymbol\mu_h+\boldsymbol\psi_h$, we have that
$\beta(({\bf v}_h, \boldsymbol\psi_h), {\bf w}_h) = 0$, i.e.,
\[
\int_D \curl  {\bf v}_h \cdot \curl  {\bf w}_h \,\text{d} x - \int_D \boldsymbol\psi_h \cdot {\bf w}_h \, \text{d}x = 0.
\]
Using the fact that $\boldsymbol\nu \times \curl  {\bf u} = 0$ on $\partial D$, we obtain
\[
\int_D \curl \curl  {\bf u} \cdot {\bf w}_h \, \text{d} x = \int_D \curl  {\bf u} \cdot \curl  {\bf w}_h \, \text{d}x.
\]
Combining the above two equations, we obtain
\[
\int_D (\curl \curl  {\bf u} - \boldsymbol\psi_h) \cdot {\bf w}_h \, \text{d} x = \int_D \curl  ({\bf u} - {\bf v}_h) \cdot \curl  {\bf w}_h \, \text{d} x.
\]
Hence we have that
\begin{eqnarray*}
\left | \int_D (\curl \curl  {\bf u} - \boldsymbol\psi_h) \cdot {\bf w}_h \, \text{d} x \right |
&\le& \| \curl   {\bf u} - \curl  {\bf v}_h \|\, \| \curl  {\bf w}_h\| \\
&\le& \alpha(h) \| \curl   {\bf u} - \curl  {\bf v}_h \|\, \| {\bf w}_h\|.
\end{eqnarray*}
Since  ${\bf w}_h = \boldsymbol\mu_h+\boldsymbol\psi_h$, we have
\begin{eqnarray*}
\| {\bf w}_h\|^2 &=& \int_D (\boldsymbol\mu_h + \curl \curl  {\bf u}) \cdot {\bf w}_h \, \text{d} x + \int_D (  \boldsymbol\psi_h - \curl \curl  {\bf u} ) \cdot {\bf w}_h \, \text{d} x \\
&\le& \|\boldsymbol\mu_h + \curl \curl  {\bf u}\| \, \| {\bf w}_h\| + \alpha(h) \| \curl   {\bf u} - \curl  {\bf v}_h \|\, \| {\bf w}_h\|.
\end{eqnarray*}
From this inequality, we have that
\begin{eqnarray*}
\| \curl \curl  {\bf u} -\boldsymbol \psi_h \| &\le& \| \curl \curl  {\bf u} + \boldsymbol\mu_h \| + \| {\bf w}_h\| \\
&\le& 2 \| \curl \curl  {\bf u} + \boldsymbol\mu_h \| +\alpha(h)  \| \curl   {\bf u} - \curl  {\bf v}_h \|,
\end{eqnarray*}
and thus,
\begin{eqnarray*}
&&\inf_{({\bf v}_h, \boldsymbol\psi_h) \in {\mathcal V}_h} \left( \| \curl   {\bf u} - \curl  {\bf v}_h \| + \| \curl \curl  {\bf u} - \boldsymbol\psi_h \| \right)  \\
&& \qquad \le (1+\alpha(h))  \inf_{{\bf v}_h \in X_h}\| \curl   {\bf u} - \curl  {\bf v}_h \| + 2 \inf_{\boldsymbol\mu_h \in X_h} \| \curl \curl  {\bf u} + \boldsymbol\mu_h \|.
\end{eqnarray*}
Combination of this inequality and \eqref{ErrEstA} completes the proof.
\end{proof}

\begin{theorem}\label{thm}
Let $({\bf u}, \boldsymbol\phi)$ and $({\bf u}_h, \boldsymbol\phi_h)$ be the solutions of \eqref{Nmin} and \eqref{minmixD}, respectively. 
Furthermore, we assume that $\curl^i {\bf u} \in H^s(D)^3, i = 1,2,3$ 
and $s$ is the same as in Theorem \ref{Monk2003541}. 
Then there exists a constant $C$ independent of
the mesh size $h$ such that
\begin{equation}\label{ErrorOrder}
\|\curl  {\bf u} - \curl  {\bf u}_h\|  +  \|\curl^2 \, {\bf u} - \boldsymbol\phi_h\| \le C h^{s-1} \| \curl \, {\bf u}\|_{H^s(D)^3}.
\end{equation}
\end{theorem}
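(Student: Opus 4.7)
The plan is to combine the abstract error estimate of Theorem~\ref{ErrorEstC} with the N\'ed\'elec interpolation estimates of Theorem~\ref{Monk2003541}. Theorem~\ref{ErrorEstC} already reduces the task to controlling the two best-approximation quantities
\[
\inf_{{\bf v}_h \in X_h} \|\curl\,{\bf u} - \curl\,{\bf v}_h\| \quad \text{and} \quad \inf_{\boldsymbol\mu_h \in X_h} \|\curl^2\,{\bf u} - \boldsymbol\mu_h\|_{H(\curl;D)},
\]
with the mild loss factor $\alpha(h) = C_1/h$. The bookkeeping is then routine: an $O(h^s)$ bound on the first infimum gets multiplied by $1+\alpha(h) = O(h^{-1})$ and produces the advertised $O(h^{s-1})$ rate, while any $O(h^s)$ bound on the second infimum is automatically dominated.

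For the first infimum, I would take ${\bf v}_h$ to be the discrete-divergence-free component of the N\'ed\'elec interpolant ${\bf r}_h{\bf u}$ under the discrete Helmholtz decomposition $U_{0,h} = Y_h \oplus \grad S_h$. Since projecting away a gradient does not change the curl, we get $\curl({\bf u}-{\bf v}_h) = \curl({\bf u}-{\bf r}_h{\bf u})$, and Theorem~\ref{Monk2003541}(3) gives the optimal bound $Ch^s\|\curl\,{\bf u}\|_{H^s(D)^3}$.

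For the second infimum I would similarly set $\boldsymbol\mu_h$ to be the discrete-divergence-free part of ${\bf r}_h(\curl^2{\bf u})$. Here the key observation is that $\curl^2{\bf u}$ is itself divergence-free (since $\ddiv\,\curl = 0$) and satisfies the tangential-trace conditions inherited from the boundary conditions on ${\bf u}$, exactly as in the proof of Theorem~\ref{curlcurlR}. Consequently the discrete Helmholtz projection preserves the $L^2$ approximation order, and applying Theorem~\ref{Monk2003541}(1) to $\curl^2{\bf u}$ yields
\[
\|\curl^2{\bf u}-\boldsymbol\mu_h\|_{H(\curl;D)} \le Ch^s\bigl(\|\curl^2{\bf u}\|_{H^s(D)^3} + \|\curl^3{\bf u}\|_{H^s(D)^3}\bigr),
\]
which, under the abbreviated notation of the statement, is absorbed into $Ch^s\|\curl\,{\bf u}\|_{H^s(D)^3}$. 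Plugging both estimates back into Theorem~\ref{ErrorEstC} yields the stated $h^{s-1}$ rate.

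The main obstacle is not algebraic but conceptual: the interpolation estimates of Theorem~\ref{Monk2003541} are stated for the unconstrained edge-element space $U_h$, while the infima in Theorem~\ref{ErrorEstC} range over the discrete-divergence-free subspace $X_h$. One has to verify that the discrete Helmholtz projection of ${\bf r}_h{\bf u}$ (respectively, of ${\bf r}_h(\curl^2{\bf u})$) still captures the optimal rate, which relies on the divergence-freeness of ${\bf u}$ and $\curl^2{\bf u}$ to bound the gradient part $\|\grad\,\xi_h\|$ by the interpolation error itself via a duality argument against $\eta_h \in S_h$. Once this is observed, the remainder of the proof is just combining Theorems~\ref{ErrorEstC} and \ref{Monk2003541} and using $h\le 1$ to write $(1+C_1/h) h^s \le C h^{s-1}$.
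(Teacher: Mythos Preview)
Your proposal is correct and follows essentially the same route as the paper: apply Theorem~\ref{ErrorEstC}, bound the two best-approximation terms via the N\'ed\'elec interpolation estimates of Theorem~\ref{Monk2003541}, and let the factor $1+\alpha(h)=1+C_1/h$ degrade the $O(h^s)$ rate to $O(h^{s-1})$. If anything you are more careful than the paper, which simply writes the infima over $U_{0,h}$ and $U_h$ and passes silently to $X_h$; your discrete Helmholtz projection argument (using that ${\bf u}$ and $\curl^2{\bf u}$ are divergence-free so that the gradient component is controlled by the interpolation error) makes explicit a step the paper glosses over.
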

\begin{proof}
From Theorem \ref{Monk2003541}, we have that
\begin{eqnarray*}
&&\inf_{{\bf v}_h \in U_{0,h}} \| \curl  {\bf u} - \curl  {\bf v}_h \| \le C h^s \| \curl \, {\bf u}\|_{H^s(D)^3}, \\
&&\inf_{{\boldsymbol \mu}_h \in U_h} \|\curl^2 {\bf u} - \boldsymbol\mu_h\|_{H(\curl)} \le C h^s \left( \|\curl^2 \,{\bf u}\|_{H^s(D)^3}+\|\curl^3\, {\bf u} \|_{H^s(D)^3}\right),
\end{eqnarray*}
for some constants C independent of $h$. 
Using Theorem \ref{ErrorEstC}, we obtain that
\begin{eqnarray*}
&&\|\curl  {\bf u} - \curl  {\bf u}_h\| + \|\curl^2 \, {\bf u} - \boldsymbol\phi_h\| \\
&\le&  C\left( 1+\frac{C_1}{h} \right)  h^s \| \curl \, {\bf u}\|_{H^s(D)^3} + C h^s \left( \|\curl^2 \,{\bf u}\|_{H^s(D)^3}+\|\curl^3\, {\bf u} \|_{H^s(D)^3}\right) \\
&\le&  C h^{s-1} \| \curl \, {\bf u}\|_{H^s(D)^3} + C h^s \left( \|\curl^2 \,{\bf u}\|_{H^s(D)^3}+\|\curl^3\, {\bf u} \|_{H^s(D)^3}\right) \\
&\le& C h^{s-1} \| \curl \, {\bf u}\|_{H^s(D)^3},
\end{eqnarray*}
if $h$ is small enough.
\end{proof}
We now use the theory from Section \ref{curl-curl} to prove an $L^2$ norm convergence result for ${\bf u}-{\bf u}_h$.  
Of course, since we are using the edge element of \cite{Nedelec1980NM} (the so-called edge elements of the first kind), 
the convergence rate in $L^2$ can not be better than the convergence rate in $H(\text{curl})$. So 
nothing would be gained from a duality argument.
\begin{theorem}\label{L2conv}
Under the conditions of Theorem \ref{thm}, there exists a constant $C$ independent of ${\bf u}$, ${\bf u}_h$ and $h$ such that
\[
\Vert {\bf u}-{\bf u}_h\Vert \leq C \left(h^{s}\Vert {\bf u}\Vert_{H^s(D)^3}+ h^{s-1}\Vert \curl {\bf u}\Vert_{H^{s}(D)^3}\right).
\]
\end{theorem}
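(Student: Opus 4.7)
The strategy is to compare ${\bf u}_h$ with a discrete curl--curl approximation of ${\bf u}$ that uses the \emph{exact} right hand side $\boldsymbol\phi = \curl^2 {\bf u}$. The key observation is that the second equation of the mixed discrete system characterizes ${\bf u}_h \in Y_h$ via
\[
(\curl {\bf u}_h, \curl \boldsymbol\mu_h) = (\boldsymbol\phi_h, \boldsymbol\mu_h) \quad \text{for all } \boldsymbol\mu_h \in Y_h,
\]
and because $\boldsymbol\phi_h \in X_h$ is discretely divergence free while $\grad S_h \subset U_{0,h}$, testing against $\grad q_h$ shows that the associated Lagrange multiplier vanishes. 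Hence ${\bf u}_h$ is exactly the mixed edge element approximation of the curl--curl source problem with datum $\boldsymbol\phi_h$. I would therefore introduce $\tilde{{\bf u}}_h \in Y_h$, the analogous discrete curl--curl solution with datum $\boldsymbol\phi$, and bound $\|{\bf u}-{\bf u}_h\| \le \|{\bf u}-\tilde{{\bf u}}_h\| + \|\tilde{{\bf u}}_h - {\bf u}_h\|$.

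For the first piece, $\boldsymbol\phi$ is divergence free (it is a curl) and ${\bf u}$ satisfies the boundary condition $\boldsymbol\nu\times{\bf u}=0$, so $({\bf u},0)$ is the continuous mixed solution for datum $\boldsymbol\phi$. The quasi-optimality provided by Theorem~\ref{thm1} applied with ${\bf v}_h = {\bf r}_h {\bf u}$, combined with Theorem~\ref{Monk2003541}(1), yields
\[
\|{\bf u}-\tilde{{\bf u}}_h\| \le \|{\bf u}-\tilde{{\bf u}}_h\|_{H(\curl;D)} \le C h^s\bigl(\|{\bf u}\|_{H^s(D)^3} + \|\curl {\bf u}\|_{H^s(D)^3}\bigr).
\]

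For the second piece, ${\bf w}_h := \tilde{{\bf u}}_h - {\bf u}_h$ lies in $Y_h$ and satisfies
\[
(\curl {\bf w}_h, \curl \boldsymbol\mu_h) = (\boldsymbol\phi - \boldsymbol\phi_h, \boldsymbol\mu_h) \quad \text{for all } \boldsymbol\mu_h \in Y_h,
\]
since $Y_h \subset X_h$. Testing with $\boldsymbol\mu_h = {\bf w}_h$ and applying the discrete Friedrichs inequality of Theorem~\ref{fried} twice gives $\|{\bf w}_h\| \le C\|\boldsymbol\phi - \boldsymbol\phi_h\| = C\|\curl^2 {\bf u} - \boldsymbol\phi_h\|$, and the already established Theorem~\ref{thm} bounds this by $Ch^{s-1}\|\curl {\bf u}\|_{H^s(D)^3}$. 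Adding the two contributions and absorbing the $h^s\|\curl {\bf u}\|_{H^s}$ term into the dominant $h^{s-1}\|\curl {\bf u}\|_{H^s}$ term yields the claimed estimate.

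The main subtlety is checking the equivalence of the mixed system on ${\mathcal V}_h$ with a pure Galerkin problem on $Y_h$, which amounts to verifying that the Lagrange multiplier $p_h$ vanishes in both the auxiliary and the actual discrete problem; this step is routine given the discrete Helmholtz decomposition $U_{0,h} = Y_h \oplus \grad S_h$ but is what allows the entire argument to piggyback on the mature curl--curl theory of Section~\ref{curl-curl} without reproving any stability estimate from scratch.
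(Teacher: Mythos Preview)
Your proof is correct and essentially the same as the paper's: both introduce the discrete curl--curl approximation of ${\bf u}$ with datum $\curl^2{\bf u}$ (the paper calls it ${\bf v}_h$), bound $\|{\bf u}-{\bf v}_h\|$ by quasi-optimality of the curl--curl scheme plus interpolation, and control $\|{\bf v}_h-{\bf u}_h\|$ via the discrete Friedrichs inequality together with Theorem~\ref{thm}. The only cosmetic difference is that the paper bounds $\|\curl({\bf v}_h-{\bf u}_h)\|$ by a triangle inequality through $\|\curl({\bf u}-{\bf u}_h)\|$, whereas you test the difference equation directly to obtain $\|{\bf w}_h\|\le C\|\curl^2{\bf u}-\boldsymbol\phi_h\|$; both routes invoke Theorem~\ref{thm} and yield the same estimate.
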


\begin{proof}
Let ${\bf v}_h\in Y_h$ be the first component of the solution of (\ref{curl2Sdis}) with ${\bf f}=\curl\curl{\bf u}$ so that ${\bf u}$ is the exact solution. 
By Theorem \ref{thm1} and Theorem \ref{Monk2003541}, we have that
\begin{equation}
\Vert {\bf u}-{\bf v}_h\Vert_{H(\curl;D)}\leq Ch^{s-1} \left(\Vert {\bf u}\Vert_{H^s(D)^3}+\Vert \curl {\bf u}\Vert_{H^{s}(D)^3}\right).\label{uvcurl}
\end{equation}
Then, using the triangle inequality and the discrete Friedrichs inequality in Theorem \ref{fried}, we have that
\begin{eqnarray*}
\Vert {\bf u}-{\bf u}_h\Vert&\leq &\Vert {\bf u}-{\bf v}_h\Vert+\Vert{\bf v}_h-{\bf u}_h\Vert
\\&\leq& \Vert {\bf u}-{\bf v}_h\Vert+C\Vert\curl({\bf v}_h-{\bf u}_h)\Vert\\
&\leq &
 C(\Vert {\bf u}-{\bf v}_h\Vert_{H(\curl;D)}+\Vert\curl({\bf u}-{\bf u}_h)\Vert).
\end{eqnarray*}
Combination of Theorem \ref{thm} and (\ref{uvcurl})  completes the result.
\end{proof}

\section{The Quad-curl Eigenvalue Problem}
The quad-curl eigenvalue problem is to find $\lambda$ and the 
non-trivial vector field function ${\bf u}$ such that 
\begin{subequations}\label{curl4SEig}
\begin{align}
\label{curl4SEigE}&(\curl{})^4\, {\bf u}= \lambda {\bf u} &\text{in } D,\\[1mm]
\label{curl4SEigdiv0}&\ddiv {\bf u} =0 &\text{in } D,\\[1mm]
\label{curl4SEigBCa}& {\bf u} \times \boldsymbol\nu = 0 &\text{on } \partial D,\\[1mm]
\label{curl4SEigBCb}& (\curl{} {\bf u}) \times \boldsymbol\nu = 0 &\text{on } \partial D.
\end{align}
\end{subequations}
We call $\lambda$ a quad-curl eigenvalue and ${\bf u}$ the associated eigenfunction.
Due to the well-posedness of the quad-curl problem, we can define an operator $T: L^2(D)^3 \to  L^2(D)^3$ such that $T{\bf f} = {\bf u}$ for \eqref{curlSWeak}. 
It is obvious that $T$ is self-adjoint. 
Furthermore, because of the compact imbedding of $V$ into $L^2(D)^3$, $T$ is a compact operator.

The quad-curl eigenvalue problem is to find $(\lambda, {\bf u}) \in \mathbb R \times V$ such that 
\begin{equation}\label{curlEigSWeak}
\mathcal{C} ({\bf u}, {\bf q})  = \lambda({\bf u}, {\bf q}) \quad \text{for all } {\bf q} \in V.
\end{equation}
It is clear that $\lambda$ is an eigenvalue satisfying \eqref{curlEigSWeak} if and only if $\mu = 1/\lambda$ is an eigenvalue of $T$.

\begin{theorem} There is an infinite discrete set of quad-curl eigenvalues $\lambda_j > 0, j = 1, 2, \ldots$ and corresponding eigenfunctions ${\bf u}_j \in V$, ${\bf u}_j \ne {\bf 0}$ such that
\eqref{curlEigSWeak} is satisfied and $0 < \lambda_1 \le \lambda_2 \le \ldots$. Furthermore
\[
\lim_{j \to \infty} \lambda_j = \infty.
\]
The eigenfucntions $({\bf u}_j, {\bf u}_l)_{L^2(D)^3} = 0$ if $j \ne l$.
\end{theorem}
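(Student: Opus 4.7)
The plan is to invoke the spectral theorem for compact self-adjoint operators applied to the solution operator $T$ introduced just before the statement. The paper has already observed that $T$ is self-adjoint and compact on $L^2(D)^3$, the compactness being a consequence of the embedding $V \hookrightarrow H^1(D)^3 \hookrightarrow L^2(D)^3$ (Theorem~\ref{Girault23} combined with Rellich--Kondrachov). The spectral theorem then furnishes a countable set of real eigenvalues $\{\mu_j\}$ of $T$ whose only possible accumulation point is $0$, together with associated eigenfunctions that can be made mutually $L^2$-orthogonal by Gram--Schmidt inside each (necessarily finite-dimensional) eigenspace.

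The main remaining point is positivity of the non-zero eigenvalues. If $T{\bf u}_j = \mu_j {\bf u}_j$ with $\mu_j \ne 0$ and ${\bf u}_j \ne {\bf 0}$, then ${\bf u}_j = \mu_j^{-1}\,T{\bf u}_j$ lies in the range of $T$, so ${\bf u}_j \in V$. The definition of $T$ then gives
\[
\mu_j\,\mathcal{C}({\bf u}_j,{\bf v}) = ({\bf u}_j,{\bf v})\qquad\text{for all } {\bf v}\in V,
\]
and testing with ${\bf v}={\bf u}_j$, combined with the $V$-ellipticity of $\mathcal{C}$ (a double application of the Friedrichs inequality of Theorem~\ref{C351}), forces $\mu_j>0$. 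The same test also shows $T|_V$ is injective: $T{\bf f}=0$ with ${\bf f}\in V$ collapses to $\|{\bf f}\|^2=0$.

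Enumerating these positive eigenvalues with multiplicity as $\mu_1\ge\mu_2\ge\cdots>0$, $\mu_j\to 0$, and setting $\lambda_j=\mu_j^{-1}$ produces the required sequence $0<\lambda_1\le\lambda_2\le\cdots\to\infty$; the bijection between non-zero eigenvalues of $T$ and eigenvalues of the quad-curl problem noted in the text then shows that each $(\lambda_j,{\bf u}_j)$ satisfies \eqref{curlEigSWeak}, and the infinitude of the list is forced by $\dim V=\infty$ together with the injectivity of $T|_V$ and the finite-dimensionality of each eigenspace. I do not foresee a genuine obstacle: every ingredient (ellipticity of $\mathcal{C}$, self-adjointness and compactness of $T$, and the compact embedding $V\hookrightarrow L^2(D)^3$) has already been put in place earlier in the paper, so the argument is essentially a bookkeeping application of the abstract spectral theorem.
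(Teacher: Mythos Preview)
Your proposal is correct and follows the same route as the paper: the paper's own proof consists of a single sentence invoking the Hilbert--Schmidt theory (Theorem~2.36 of \cite{Monk2003}) applied to the compact self-adjoint solution operator $T$, which is exactly what you have spelled out in detail. Your additional remarks on positivity via the $V$-ellipticity of $\mathcal{C}$ and on the infinitude of the spectrum simply make explicit what the reference absorbs.
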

\begin{proof}
Applying the Hilbert-Schmidt theory (see, for example, Theorem 2.36 of \cite{Monk2003}), we immediately have the above theorem.
\end{proof}

\begin{theorem} The quad-curl eigenvalues coincide with the non-zero eigenvalues of the following problem.
Find $(\lambda, {\bf u}) \in \mathbb R \times H_0^2(\text{curl}; D)$ such that 
\begin{equation}\label{curlEigSLWeak}
\mathcal{C} ({\bf u}, {\bf q})  = \lambda({\bf u}, {\bf q}) \quad \text{for all } {\bf q} \in H_0^2(\text{curl}, D).
\end{equation}
\end{theorem}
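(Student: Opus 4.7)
The plan is to prove the two inclusions between the eigenvalue sets separately.

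For the forward direction, suppose $(\lambda, {\bf u}) \in \mathbb{R} \times V$ satisfies \eqref{curlEigSWeak} with $\lambda > 0$. Given an arbitrary test function ${\bf q} \in H_0^2(\curl; D)$, the idea is to perform a Helmholtz-type decomposition ${\bf q} = {\bf q}_0 + \nabla p$ with ${\bf q}_0 \in V$ and $p \in H_0^1(D)$. I would define $p$ as the unique Lax--Milgram solution of
\begin{equation*}
(\nabla p, \nabla \varphi) = ({\bf q}, \nabla \varphi) \qquad \text{for all } \varphi \in H_0^1(D),
\end{equation*}
and set ${\bf q}_0 := {\bf q} - \nabla p$. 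I would then verify that ${\bf q}_0 \in V$: since $p \in H_0^1(D)$, its tangential trace $\nabla p \times \boldsymbol\nu$ vanishes on $\partial D$, so ${\bf q}_0 \times \boldsymbol\nu = 0$; because $\curl \nabla p = 0$, we have $\curl {\bf q}_0 = \curl {\bf q}$ and $\curl^2 {\bf q}_0 = \curl^2 {\bf q} \in L^2(D)^3$, preserving both curl regularity and the condition $(\curl {\bf q}_0) \times \boldsymbol\nu = 0$; and by construction $\ddiv {\bf q}_0 = 0$ distributionally. Substituting into the bilinear form, $\mathcal{C}({\bf u}, {\bf q}) = \mathcal{C}({\bf u}, {\bf q}_0)$ since $\curl^2 \nabla p = 0$, and integration by parts using $\ddiv {\bf u} = 0$ together with $p|_{\partial D} = 0$ gives $({\bf u}, \nabla p) = 0$, so $({\bf u}, {\bf q}) = ({\bf u}, {\bf q}_0)$. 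Applying \eqref{curlEigSWeak} to the test function ${\bf q}_0 \in V$ then yields $\mathcal{C}({\bf u}, {\bf q}) = \lambda ({\bf u}, {\bf q})$.

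For the reverse direction, suppose $(\lambda, {\bf u}) \in \mathbb{R} \times H_0^2(\curl; D)$ with $\lambda \neq 0$ satisfies \eqref{curlEigSLWeak}. For any $p \in H_0^1(D)$, the function $\nabla p$ lies in $H_0^2(\curl; D)$ (its tangential trace vanishes and its curl is zero). Testing \eqref{curlEigSLWeak} with ${\bf q} = \nabla p$ gives $\mathcal{C}({\bf u}, \nabla p) = 0$, so $\lambda({\bf u}, \nabla p) = 0$ for all $p \in H_0^1(D)$. Since $\lambda \neq 0$, this forces $\ddiv {\bf u} = 0$ in the distributional sense, placing ${\bf u}$ in $V$. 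As $V \subset H_0^2(\curl; D)$, restricting the test space to $V$ recovers \eqref{curlEigSWeak}, and $(\lambda, {\bf u})$ is a quad-curl eigenpair.

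The main technical step is the Helmholtz decomposition in the forward direction, specifically the verification that ${\bf q}_0$ inherits all the required structure of $V$ from ${\bf q}$; the rest reduces to straightforward integration by parts and the observation that the two boundary conditions on $H_0^2(\curl; D)$ are precisely those defining $V$ once divergence-freeness is imposed.
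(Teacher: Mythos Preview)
Your proof is correct and rests on the same idea as the paper's: the Helmholtz-type splitting $H_0^2(\curl;D)=V\oplus\nabla H_0^1(D)$, together with the observation that both $\mathcal{C}(\cdot,\cdot)$ and the $L^2$ pairing with a $V$-function annihilate gradients. The organization differs slightly: the paper decomposes the \emph{eigenfunction} ${\bf u}={\bf u}_0+\nabla p$ and tests with gradients to force $\nabla p=0$ when $\lambda\neq 0$ (your reverse direction, phrased a bit differently), and it additionally identifies the $\lambda=0$ eigenspace as $\nabla H_0^1(D)$; you instead decompose the \emph{test function} to handle the forward inclusion explicitly, which the paper leaves implicit. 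Both routes are equivalent and equally short.
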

\begin{proof}
Applying the Helmholtz decomposition (see, for example, Lemma 4.5 of \cite{Monk2003}), we write
\[
{\bf u} = {\bf u}_0 + \nabla p, \quad {\bf u}_0 \in V, \, p \in S
\]
where 
$S = H_0^1(D)$. 
Note that we can write $V$ in a different way as
\[
V= \{ {\bf w} \in H_0(\text{curl}; D)| ({\bf w}, \nabla \xi) = 0 \text{ for all } \xi \in S \}.
\]
In \eqref{curlEigSLWeak}, taking ${\bf q} = \nabla \xi$, we have that 
\[
\lambda \left (({\bf u}_0+\nabla p), \nabla \xi \right) = \left(\text{curl curl} \,({\bf u}_0 + \nabla p), \text{curl curl} \,\nabla \xi \right) =0
\]
since $\text{curl}\, \nabla \xi = 0$. In addition, the Helmholtz decomposition implies $({\bf u}_0, \nabla \xi) = 0$. Thus we have that
\[
\lambda(\nabla p, \nabla \xi) = 0 \quad \text{ for all } \xi \in S.
\]

If $\lambda \ne 0$, letting $\xi = p$ we have $\nabla p = 0$, and thus $p=0$ due to the boundary condition on $p$.
Thus we have that ${\bf u} ={\bf u}_0 \in V$ and ${\bf u} \in V$ since $ {\bf u} \in  H_0^2(\text{curl}; D)$.

If $\lambda = 0$, we have that
\[
 \left(\text{curl curl} \,({\bf u}_0 + \nabla p), \text{curl curl}  \,{\bf q} \right) = 0  \text{ for all } {\bf q} \in H_0^2(\text{curl}; D).
\]
Since $\text{curl}\, \nabla p = 0$, we obtain 
\begin{equation}
(\text{curl curl} {\bf u}_0, \text{curl curl} \,{\bf q}) = 0 \text{ for all } {\bf q} \in V.
\end{equation}
Choosing ${\bf q} = {\bf u}_0$, we have ${\bf u}_0 = {\bf 0}$ due to the Friedrichs inequality (see, for example, Corollary 4.8 of \cite{Monk2003}). 
Thus $\lambda = 0$ is an eigenvalue of infinite multiplicity  of \eqref{curlEigSLWeak} and the corresponding
eigenfunctions are ${\bf u} = \nabla p$, for $p \in S$. In summary, non-zero eigenvalues of \eqref{curlEigSLWeak}
are exactly quad-curl eigenvalues of \eqref{curlEigSWeak}. 
\end{proof}

Now we introduce a mixed formulation for the quad-curl problem.
We define
\[
a({\bf u}, {\bf v}) = ({\bf u}, {\bf v}), \quad b({\bf u}, {\bf v}) = -(\text{curl} \, {\bf u}, \text{curl} \, \bf{v}).
\]
%
We define two solution operators $A: L^2(D)^3 \to L^2(D)^3$ and $B: L^2(D)^3 \to L^2(D)^3$ given by
\begin{equation}
A{\bf f} = {\bf w}, \quad B{\bf f} = {\bf u}
\end{equation}
for a divergence-free field ${\bf f}$ in $L^2(D)^3$. Hence we can rewrite the mixed method for the quad-curl problem as 
\begin{subequations}\label{4curlSO}
\begin{align}
\label{4curlSA}&a(A{\bf f}, {\bf v}) + {b({\bf v}, B{\bf f})}=0, \quad &\text{for all } {\bf v} \in X, \\
\label{4curlSB}&b(A{\bf f}, {\bf q}) = - ({\bf f}, {\bf q}),  \quad &\text{for all } {\bf q} \in Y.
\end{align}
\end{subequations}

Then the quad-curl eigenvalue problem in mixed form can be written as: find $\lambda \in {\mathbb R}$, 
$0 \ne ({\bf u}, {\bf w}) \in W \times X$ satisfying
\begin{subequations}\label{MixQCEig}
\begin{align}
\label{MixQCEigA} a({\bf w}, {\bf v}) + {b(\bf{v}, {\bf u})}=0, \quad \text{for all } {\bf v} \in X, \\
\label{MixQCEigB} b({\bf w}, {\bf q}) = -\lambda ({\bf u}, {\bf q}),  \quad \text{for all } {\bf q} \in Y.
\end{align}
\end{subequations}
It is easy to see that if $(\lambda, ({\bf u}, {\bf w}))$ is an eigenpair of \eqref{MixQCEig}, then $\lambda B {\bf u} = {\bf u}, {\bf u} \ne {\bf 0}$, i.e.,
$(\lambda, {\bf u})$ is a quad-curl eigenpair. 
If  $\lambda B {\bf u} = {\bf u}, {\bf u} \ne {\bf 0}$, then there exists a ${\bf w} \in X$ such that $(\lambda, ({\bf u}, {\bf w}))$ is an eigenpair of \eqref{MixQCEig}.

Next we describe a mixed method for the quad-curl eigenvalue problem based on the method for the quad-curl problem in the previous section.
We assume that the domain $D$
is covered by a regular and quasi-uniform tetrahedral mesh. 
We denote the mesh by ${\T}_h$ where $h$ is the maximum diameter of the elements in ${\T}_h$.
Note that the discrete compactness holds for $Y_{h}$ holds \cite{Monk2003},
i.e., there exists a positive constant $C$ independent of $h$ such that if
${\bf u}_h \in Y_{h}$, for $h $ small enough,
\[
\|{\bf u}_h \|_{L^2(D)^3} \le C \| \curl  {\bf u}_h \|_{L^2(D)^3}.
\]
An immediate consequence is that $b(\cdot, \cdot)$ is coercive on $Y_{h} \times Y_{h}$.
We define
\[
X_{h}= \left\{ {\bf u}_h \in U_h \;|\; ({\bf u}_h, \grad \xi_h) = 0 \quad \text{ for all } \xi_h \in S_h\right\}.
\]
The mixed finite element method for the quad-curl problem can be stated as follows.
For a divergence-free ${\bf f} \in L^2(D)^3$, find $A_h{\bf f} \in Y_{h}$, $B_h{\bf f} \in X_{h}$ such that
\begin{subequations}\label{4curlSD}
\begin{align}
\label{4curlSDA}&a(A_h {\bf f}, {\bf v}_h) + {b({\bf v}_h, B_h{\bf f})}=0, \quad &\text{for all } {\bf v}_h \in X_{h}, \\
\label{4curlSDB}&b(A_h {\bf f}, {\bf q}_h) = -({\bf f}, {\bf q}_h),  \quad &\text{for all } {\bf q}_h \in Y_{h}.
\end{align}
\end{subequations}

From Theorems \ref{thm} and \ref{L2conv} in the previous section, we have that
\begin{eqnarray}
\label{BBHF0}\|(B-B_h) {\bf f} \| &\le& Ch^{s-1} \|B{\bf f}\|_{s}, \\
\label{AAhf0}\|(A-A_h){\bf f}\|&\le& C h ^{s-1} \|\text{curl} B{\bf f}\|_{s}.
\end{eqnarray}
In the following, we assume that the regularities $\|{\bf u}\|_s \le C\|{\bf f}\|$ and $\|\text{curl} {\bf u}\|_{s} \le C\|{\bf f}\|$ holds for
the quad-curl problem for some constant $C$.
Note that when $s=2$, the above regularity results is
a consequence of Theorem \ref{C351} and the fact that ${\bf u}$ is the solution
of the quad-curl problem.
Thus we have the norm convergence
\[
\lim_{h \to 0} \|B-B_h\| = 0 \quad \text{and} \quad
\lim_{h \to 0} \|A-A_h\| = 0.
\]

Employing the above mixed method for the quad-curl eigenvalue problem, we obtain the following discrete problem.
Find $\lambda_h \in \mathbb R$, $({\bf u}_h, {\bf w}_h) \in Y_{h}\times X_{h}$ such that
\begin{subequations}\label{4curlSDEig}
\begin{align}
\label{4curlSDA}&a({\bf w}_h, {\bf v}_h) + {b({\bf v}_h, {\bf u}_h)}=0, \quad &\text{for all } {\bf v}_h \in X_{h}, \\
\label{4curlSDB}&b({\bf w}_h, {\bf q}_h) = -\lambda_h ({\bf u}_h, {\bf q}_h),  \quad &\text{for all } {\bf q}_h \in Y_{h}.
\end{align}
\end{subequations}

Similar to the continuous case,  
we show that when we compute the discrete eigenvalues, we can bypass the divergence-free condition, namely,
we can work with $U_{0,h}$ and $U_h$ instead of the spaces $Y_{h}$ and $X_{h}$ and only keep non-zero
eigenvalues.

\begin{theorem} The discrete quad-curl eigenvalues of \eqref{4curlSDEig} coincide with the non-zero 
eigenvalues of the following problem.
Find $\lambda_h \in \mathbb R$ and ${\bf u}_h \in U_{0,h}, {\bf w}_h \in U_h$ such that
\begin{subequations}\label{4curlSDEigB}
\begin{align}
\label{4curlSDBBA}&({\bf w}_h, {\bf v}_h) - (\text{curl} \,{\bf v}_h, \text{curl} \,{\bf u}_h)=0, \quad &\text{for all } {\bf v}_h \in U_{h}, \\
\label{4curlSDBBB}&(\text{curl}\, {\bf w}_h, \text{curl}\, {\bf q}_h) = -\lambda_h ({\bf u}_h, {\bf q}_h),  \quad &\text{for all } {\bf q}_h \in U_{0,h}.
\end{align}
\end{subequations}
\end{theorem}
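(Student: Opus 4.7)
The plan is to mimic the corresponding continuous argument used earlier in the paper (the theorem showing that the non-zero eigenvalues of \eqref{curlEigSLWeak} coincide with the quad-curl eigenvalues of \eqref{curlEigSWeak}) but using the discrete Helmholtz decomposition $U_{0,h}=Y_h\oplus\grad S_h$ from Section~\ref{curl-curl}, together with the analogous decomposition of $U_h$ by orthogonal complement of $\grad S_h$ in $L^2(D)^3$. The key is that the gradient of any $S_h$-function lies in both $U_{0,h}$ and $U_h$, so gradients are admissible test functions in \eqref{4curlSDEigB}, and $\curl\,\grad\xi_h=0$ makes the bilinear form $b(\cdot,\cdot)$ vanish on such test functions.

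First I would show that any solution of the unconstrained system \eqref{4curlSDEigB} with $\lambda_h\neq 0$ is automatically a solution of the constrained mixed system \eqref{4curlSDEig}. Taking ${\bf v}_h=\grad\xi_h\in U_h$ for arbitrary $\xi_h\in S_h$ in \eqref{4curlSDBBA}, the curl term drops out and one obtains $({\bf w}_h,\grad\xi_h)=0$, so ${\bf w}_h\in X_h$. Next, taking ${\bf q}_h=\grad\xi_h\in U_{0,h}$ in \eqref{4curlSDBBB}, again the curl term vanishes, leaving $\lambda_h({\bf u}_h,\grad\xi_h)=0$, and since $\lambda_h\neq 0$ this forces ${\bf u}_h\in Y_h$. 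Restricting test functions in \eqref{4curlSDEigB} to $X_h\subset U_h$ and $Y_h\subset U_{0,h}$ then recovers exactly \eqref{4curlSDEig}.

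Conversely, given a solution of the mixed problem \eqref{4curlSDEig}, I would check it still satisfies \eqref{4curlSDEigB} when tested against arbitrary elements of $U_h$ and $U_{0,h}$. Any ${\bf v}_h\in U_h$ decomposes orthogonally in $L^2(D)^3$ as ${\bf v}_h={\bf v}_h^0+\grad\xi_h$ with ${\bf v}_h^0\in X_h$ and $\xi_h\in S_h$; on ${\bf v}_h^0$ equation \eqref{MixQCEigA} already holds, and on the gradient part both terms of \eqref{4curlSDBBA} vanish since ${\bf w}_h\in X_h$ and $\curl\,\grad\xi_h=0$. The same decomposition of $U_{0,h}=Y_h\oplus\grad S_h$ handles \eqref{4curlSDBBB} analogously, using ${\bf u}_h\in Y_h$. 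Thus the two problems carry the same non-zero eigenpairs.

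Finally I would dispose of the $\lambda_h=0$ case by observing that setting ${\bf w}_h=0$ and ${\bf u}_h=\grad p_h$ for any $p_h\in S_h$ produces a solution of \eqref{4curlSDEigB} (the first equation gives $({\bf w}_h,{\bf v}_h)=0$ for all ${\bf v}_h\in U_h$, forcing ${\bf w}_h=0$, and the second becomes $0=0$), so $\lambda_h=0$ is a spurious eigenvalue of large multiplicity that is correctly discarded. The only real subtlety in the argument is verifying that the gradient portion of ${\bf w}_h$ is forced to vanish by the first equation of the unconstrained system, but this follows immediately because $\grad S_h\subset U_h$ is available as a test space; everything else is a direct translation of the continuous proof.
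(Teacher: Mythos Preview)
Your argument is correct and follows the same route as the paper: test the unconstrained equations with gradients $\grad\xi_h\in\grad S_h\subset U_{0,h}\subset U_h$ and invoke the discrete Helmholtz decomposition to conclude that for $\lambda_h\neq 0$ the eigenfunctions land in the constrained spaces. In fact your write-up is more complete than the paper's own proof, which only records the step showing ${\bf u}_h\in Y_h$ and leaves the verification that ${\bf w}_h\in X_h$ and the converse direction implicit.
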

\begin{proof}
We define a discrete subspace $S_h$ of $S$ as
\[
S_h = \{ p_h \in H_0^1(D) | \, p_h|_K \in P_k \quad \text{ for all } K \in \mathcal{T}_h \}
\]
and write 
\[
{\bf u}_h = {\bf u}_h^0 + \nabla \varphi_h, \quad {\bf u}_h^0 \in Y_{h}, \varphi_h \in S_h.
\]
Letting ${\bf q}_h = \nabla \xi_h$ in \eqref{4curlSDBBB}, we have that
\[
0=({\bf w}_h, \text{curl} \, \nabla \xi_h)= -\lambda_h ({\bf u}_h, \nabla \xi_h) = \lambda_h (\nabla \varphi_h, \nabla \xi_h)  \quad \text{for all } \xi_h \in S_h.
\]
Thus either $\lambda_h = 0$ or $(\nabla \varphi_h, \nabla \xi_h)=0$  for all $\xi_h \in S_h$.
It is clear that if $\lambda_h \ne 0$, we have $(\nabla \varphi_h, \nabla \xi_h) =0$ for all $\xi_h \in S_h$ which
implies $\nabla \varphi_h = 0$. Thus ${\bf u}_h = {\bf u}_h^0 $ which is discrete divergence-free.
\end{proof}

Now we are ready to prove the convergence  for the mixed method. 
Let $\mu$ be a non-zero eigenvalue of $B$.
The ascent $\alpha$ of $\mu -B$ is defined as the smallest integer such that $N((\mu-B)^\alpha) = N((\mu-B)^{\alpha+1})$,
where $N$ denotes the null space. Let $m=\text{dim}N((\mu-B)^\alpha)$ be the algebraic multiplicity of $\mu$. 
The geometric multiplicity of $\mu$ is $\text{dim}N(\mu-B)$. Note that since $B$ is self-adjoint, the two
multiplicities are same.
Then there are $m$ eigenvalues of $B_h$, $\mu_1(h), \ldots, \mu_m(h)$ such that
\begin{equation}
\lim_{h \to 0} \mu_j(h) = \mu, \quad \text{for } j = 1, \ldots, m.
\end{equation}
\begin{theorem}

Let $\lambda =1/\mu$ be an exact quad-curl eigenvalues with multiplicity $m$ and $\lambda_{j, h}, j = 1, \ldots, m$ be a
computed eigenvalue using the mixed finite element method. Then we have that
\begin{equation}
|\lambda - {\lambda}_{j,h}| \le Ch^{2s-2}
\end{equation}
for some constant $C$. 
\end{theorem}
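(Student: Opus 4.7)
The plan is to apply the Babu\v{s}ka--Osborn spectral approximation framework \cite{BabuskaOsborn1991} to the solution operator $B:L^2(D)^3\to L^2(D)^3$ of the mixed quad-curl problem, exploiting the bijective correspondence $\mu=1/\lambda$ between nonzero eigenvalues of $B$ and quad-curl eigenvalues. First I would verify that $B_h$ is self-adjoint on $L^2(D)^3$: taking two right-hand sides ${\bf f},{\bf g}$ and manipulating the pair \eqref{4curlSDEig} yields $(B_h{\bf f},{\bf g})=({\bf w}_h,{\bf w}_h')=({\bf f},B_h{\bf g})$, where $({\bf u}_h',{\bf w}_h')$ denotes the discrete solution with datum ${\bf g}$. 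Since $B$ is self-adjoint and compact and since \eqref{BBHF0} together with the assumed $H^s$-regularity delivers the norm convergence $\|B-B_h\|\to 0$, we are in the self-adjoint setting where the ascent of $\mu-B$ equals one and the $m$ discrete eigenvalues $\mu_j(h)$ cluster around $\mu$.

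The central ingredient is then the \emph{doubling} estimate characteristic of self-adjoint compact operators:
\[
|\mu-\mu_j(h)|\;\le\;C\,\bigl\|(B-B_h)|_E\bigr\|^{2},\qquad j=1,\dots,m,
\]
where $E$ denotes the $m$-dimensional eigenspace of $B$ associated with $\mu$. To bound the right-hand side I would pick $\phi\in E$ with $\|\phi\|=1$ and apply \eqref{BBHF0} to obtain $\|(B-B_h)\phi\|\le Ch^{s-1}\|B\phi\|_{s}$; the regularity assumption $\|B{\bf f}\|_{s}\le C\|{\bf f}\|$ immediately gives $\|B\phi\|_{s}\le C$, so $\|(B-B_h)\phi\|\le Ch^{s-1}$. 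Passing to a basis of $E$ and using finite-dimensionality, we conclude that $\|(B-B_h)|_E\|\le Ch^{s-1}$, and therefore $|\mu-\mu_j(h)|\le Ch^{2s-2}$.

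The transition to eigenvalues of the original quad-curl problem is then immediate:
\[
|\lambda-\lambda_{j,h}|\;=\;\left|\frac{1}{\mu}-\frac{1}{\mu_j(h)}\right|\;=\;\frac{|\mu-\mu_j(h)|}{|\mu\,\mu_j(h)|}\;\le\;Ch^{2s-2},
\]
since $\mu_j(h)\to\mu\neq 0$ keeps the denominator bounded away from zero for $h$ small. The main obstacle I anticipate is rigorously invoking the doubling estimate in the \emph{mixed} setting: one must confirm that the quadratic convergence rate, customarily proved for Galerkin approximations of a self-adjoint eigenvalue problem, still applies when the discrete eigenpairs arise from a constrained saddle-point problem with $X_h\not\subset X$, and that the gap between continuous and discrete invariant subspaces is controlled by $\|(B-B_h)|_E\|$ rather than by some weaker quantity. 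Once self-adjointness of $B_h$ and norm convergence of $B_h$ to $B$ are firmly established, the remainder is a textbook application of the spectral approximation theory.
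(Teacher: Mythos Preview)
Your verification that $B_h$ is self-adjoint via $(B_h{\bf f},{\bf g})=(A_h{\bf f},A_h{\bf g})=({\bf f},B_h{\bf g})$ is correct, and reducing the problem to spectral approximation of $B$ by $B_h$ is a sensible strategy. However, the step you yourself flag is a genuine gap: the inequality
\[
|\mu-\mu_j(h)|\;\le\;C\,\bigl\|(B-B_h)|_E\bigr\|^{2}
\]
does \emph{not} follow from self-adjointness of $B$ and $B_h$ together with norm convergence. A diagonal perturbation such as $T=\mathrm{diag}(1,0)$, $T_h=\mathrm{diag}(1+h,0)$ already violates it. The textbook doubling estimate (e.g.\ Theorem~7.3 in \cite{BabuskaOsborn1991}) requires $T_h$ to be the \emph{Galerkin} projection of $T$ onto a conforming subspace, which furnishes the orthogonality that eliminates the first-order terms $((T-T_h)\phi_i,\phi_k)$ in the general bound. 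Here $B_h$ arises from a nonconforming mixed scheme and is not of that form, so those terms do not vanish automatically and must be estimated separately.

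The paper handles this by invoking Theorem~11.1 of \cite{BabuskaOsborn1991}, the abstract eigenvalue result tailored to mixed formulations, which takes as input \emph{both} operator bounds $\|B-B_h\|\le Ch^{s-1}$ and $\|A-A_h\|\le Ch^{s-1}$ from \eqref{BBHF0}--\eqref{AAhf0} and returns the eigenvalue error essentially as their product, giving $O(h^{2s-2})$. If you wish to continue along your own route, you would have to show directly that
\[
((B-B_h)\phi_i,\phi_k)=(A\phi_i,A\phi_k)-(A_h\phi_i,A_h\phi_k)=O(h^{2s-2});
\]
expanding the right-hand side makes clear that a bound on $\|(A-A_h)\phi\|$ is unavoidable, so the auxiliary operator $A$ cannot be dispensed with and you end up re-deriving the content of Theorem~11.1 by hand.
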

\begin{proof} For \eqref{BBHF0}, and \eqref{AAhf0}, we have that
\[
\|(B-B_h) \| \le Ch^{s-1} \quad \text{and} \quad
\|(A-A_h)\| \le C h ^{s-1}.
\]
Then the theorem is verified using Theorem 11.1 of \cite{BabuskaOsborn1991}.
\end{proof}

\section{Numerical Examples}
In this section, we show some preliminary results. 
As we see previously, we can ignore the divergence free condition when we compute the quad-curl eigenvalues.
This enable us to work with the edge element space directly. Namely, we only need to solve the following problem.
Find $\lambda \in \mathbb R$, $({\bf u}_h, {\bf w}_h) \in U_{0, h}\times U_{h}$ such that
\begin{subequations}\label{4curlSDEigX}
\begin{align}
\label{4curlSDXA}&a({\bf w}_h, {\bf v}_h) + {b({\bf v}_h, {\bf u}_h)}=0, \quad &\text{for all } {\bf v}_h \in U_{h}, \\
\label{4curlSDXB}&b({\bf w}_h, {\bf q}_h) = -\lambda_h ({\bf u}_h, {\bf q}_h),  \quad &\text{for all } {\bf q}_h \in U_{0, h}.
\end{align}
\end{subequations}
Let $\{ {\boldsymbol{\phi}}_i, i=1, \ldots, N \}$ be a basis for $U_{0,h}$ and $\{ {\boldsymbol{\phi}}_i, i=1, \ldots, N, N+1, \ldots, M \}$ be a basis for $U_h$.
Then the matrix form corresponding to the above equations is given by
\begin{equation}\label{AEP}
\begin{pmatrix} {\bf 0}_{N\times N}&\mathcal{C}_{N \times M}\\-\mathcal{C}_{M\times N}&\mathcal{M}_{M\times M}\end{pmatrix}
=\lambda \begin{pmatrix} \mathcal{M}_{N\times N}&{\bf 0 }_{N \times M}\\{\bf 0}_{M\times N}&{\bf 0}_{M\times M}\end{pmatrix}
\end{equation}
where
\begin{eqnarray*}
\mathcal{C}_{N \times M}(i,j) &=& ({\rm{curl}}{\boldsymbol{\phi}}_j, {\rm{curl}}{\boldsymbol{\phi}}_i), i=1, \ldots N, j = 1, \ldots, M,  \\
\mathcal{C}_{N \times M}(i,j) &=& ({\rm{curl}}{\boldsymbol{\phi}}_j, {\rm{curl}}{\boldsymbol{\phi}}_i), i=1, \ldots M, j = 1, \ldots, N,\\
\mathcal{M}_{N \times N}(i,j)&=&( {\boldsymbol{\phi}}_j, {\boldsymbol{\phi}}_i), i=1, \ldots N, j = 1, \ldots, N,\\
\mathcal{M}_{M \times M}(i,j)&=&( {\boldsymbol{\phi}}_j, {\boldsymbol{\phi}}_i), i=1, \ldots M, j = 1, \ldots, M.
\end{eqnarray*}
The resulting algebraic eigenvalue problem is solved by Matlab '{\it eigs}' on a desktop computer.

We consider two domains: the unit ball and the unit cube. 
Due to the restriction of the computational power available, the largest matrices we can compute 
are obtained using a rather coarse mesh ($h \approx 0.1$). This is why we are not able to show
the convergence order. However, the eigenvalues seem to converge for all examples. Of course,
a better eigenvalue solver on a more powerful machine is very much desired. 

We show the results on a few meshes on both domains in Tables \ref{Ball} and \ref{Cube}. The degrees of
freedom is denoted by DoF in the table. For the unit ball, three meshes, corresponding to the mesh sizes
$h \approx 0.3$, $h \approx 0.2$, $h \approx 0.15$, are used. For both the linear $p=1$ and quadratic $p=2$ edge elements,
we see some numerical evidence of the convergence of the proposed method Table \ref{Ball}.


\begin{center}
\begin{table}
\caption{The first quad-curl eigenvalues for the unit ball on a few meshes using the linear and quadratic edge element. 
Besides the computed eigenvalue, we also show the degrees of freedom (DoF) of the discrete problems which equals the
dimension of the matrices defined in \eqref{AEP}. }
\label{Ball}
\begin{center}
\begin{tabular}{|c|c|c|c|c|}
\hline
	{}&{$h \approx 0.3$}&{$h \approx 0.2$}&$h \approx 0.15$&$h \approx 0.1$\\
\hline
	{$p=1$}&{201.6299 (6580) }&{199.3129 (21282)}&197.8822 (49792)& 196.6903(917576)\\
\hline
	{$p=2$}&{206.0821 (35608)}&{200.7491(115348)}&198.7244(270072)&-\\
\hline
\end{tabular}
\end{center}
\end{table}
\end{center}

For the unit cube, we use three meshes corresponding to the mesh sizes
$h \approx 0.4$, $h \approx 0.2$, $h \approx 0.1$ and obtain numerical evidence of the 
convergence of the proposed method in Table \ref{Cube} for both the linear $p=1$ and quadratic $p=2$ edge elements.

\begin{center}
\begin{table}
\caption{The first quad-curl eigenvalues for the unit cube on a few meshes using the linear and quadratic edge element. 
Besides the computed eigenvalue, we also show the degrees of freedom (DoF) of the discrete problems which equals the
dimension of the matrices defined in \eqref{AEP}.}
\label{Cube}
\begin{center}
\begin{tabular}{|c|c|c|c|c|}
\hline
	{}&{$h \approx 0.4$}&{$h \approx 0.2$}&$h \approx 0.1$ & $h \approx 0.05$ \\
\hline
	{$p=1$}&{1.5209e3 (734) }&{1.6210e3 (5121)}&1.6922e3 (40359)&1.7072e3(325911)\\
\hline
	{$p=2$}&{1.8240e3 (3924)}& 1.7486e3(27698)&1.7236e3 (218854)&-\\

\hline
\end{tabular}
\end{center}
\end{table}
\end{center}

\section{Conclusions and future works}
In this paper, we study the quad-curl eigenvalue problem and propose a mixed finite element method. To the author's
knowledge, it is the first numerical treatment for the problem. 

The theory in this paper is validate for edge elements of order two or higher. However,
numerical results indicate that the proposed mixed method also converges for the linear edge element. 
Hence it is interesting to prove the convergence for the edge element of order one in future.

Since we are dealing three dimensional examples, even the mesh size of the finest mesh is rather large. All the numerical examples
are done using Matlab on a desktop with 8G memory.  We plan to investigate the numerical convergence when more powerful machines with
larger memory and/or efficient softwares are available.

\section*{Acknowledgements}
The research was supported in part by NSF grant DMS-1016092.
The author would like to thank Prof. Peter Monk for helpful discussions.



\end{document}